\newtheorem{theorem}{Theorem}[section]
\newtheorem{lemma}[theorem]{Lemma}
\newtheorem{corollary}[theorem]{Corollary}
\newtheorem{proposition}[theorem]{Proposition}
\theoremstyle{definition}
\newtheorem{definition}[theorem]{\bf Definition}
\newcommand{\concat}{\hspace*{.2mm}\widehat{\mbox{ }}\hspace*{.2mm}}
\newcommand{\set}[1]{\left\{#1\right\}}
\newcommand{\bigset}[1]{\big\{ #1 \big\}}
\renewcommand{\leq}{\leqslant}
\renewcommand{\geq}{\geqslant}
\renewcommand{\le}{\leqslant}
\renewcommand{\ge}{\geqslant}
\newcommand{\imp}{\rightarrow}
\newcommand{\M}{{\mathfrak M}}
\newcommand{\A}{\mathcal{A}}
\newcommand{\B}{\mathcal{B}}
\newcommand{\C}{\mathcal{C}}
\newcommand{\X}{\mathcal{X}}
\newcommand{\Y}{\mathcal{Y}}
\newcommand{\Z}{\mathcal{Z}}
\newcommand{\bfS}{\mathbf{S}}
\newcommand{\IPC}{{\sf IPC}}
\newcommand{\Jan}{{\sf Jan}}
\newcommand{\Th}{{\rm Th}}
\newcommand{\join}{+}
\newcommand{\meet}{\times}
\newcommand{\0}{0}
\renewcommand{\Join}{{\textstyle \sum}}
\newcommand{\Meet}{{\textstyle \prod}}
\newcommand{\dual}[1]{#1^{\rm op}}
\renewcommand{\phi}{\varphi}
\renewcommand{\bigvee}{\sum }
\begin{document}

\title[Intuitionistic logic and Muchnik degrees]
{Intuitionistic logic and Muchnik degrees}

\author[A. Sorbi]{Andrea Sorbi}
\address[Andrea Sorbi]{University of Siena \\
Dipartimento di Scienze Matematiche ed Informatiche ``Roberto Magari''\\
Pian dei Mantellini 44, 53100 Siena, Italy. } \email{sorbi@unisi.it}
\author[S. A. Terwijn]{Sebastiaan A. Terwijn}
\address[Sebastiaan A. Terwijn]{Radboud University Nijmegen\\
Department of Mathematics\\
P.O. Box 9010, 6500 GL Nijmegen, the Netherlands.
} \email{terwijn@math.ru.nl}

\begin{abstract}
We prove that there is a factor of the Muchnik lattice that captures intuitionistic
propositional logic. This complements a now classic result of Skvortsova for the Medvedev
lattice.
\end{abstract}

\subjclass{%
03D30, 
03G10. 
}

\date{}

\maketitle

\section{Introduction}
Amongst the structures arising from computability theory, the Medvedev and the Muchnik
lattices stand out for several distinguished features and a broad range of applications.
In particular these lattices have additional structure that makes them suitable as models
of certain propositional calculi. The structure of the Medvedev lattice as a Brouwer
algebra, and thus as a model for propositional logics, has been extensively studied in
several papers, see e.g.~\cite{Medvedev},~\cite{Skvortsova},
\cite{Sorbi:Brouwer},~\cite{Sorbi-Terwijn:Intermediate},~\cite{Terwijn:Constructive} .
Originally motivated, \cite{Medvedev}, as a formalization of Kolmogorov's calculus of
problems \cite{Kolmogorov:Problems}, the Medvedev lattice fails to provide an exact
interpretation of the intuitionistic propositional calculus $\IPC$;
however,~\cite{Skvortsova},  there are initial segments of the Medvedev lattice that
model exactly $\IPC$. On the other hand, little is known about the structure of the
Muchnik lattice, and of its dual, as Brouwer algebras. The goal of this paper is to show
that there are initial segments of the Muchnik lattice, in which the set of valid
propositional sentences coincides with $\IPC$. From this, it readily follows that the
valid propositional sentences that are valid in the Muchnik lattice are exactly the
sentences of the so-called logic of the weak law of excluded middle
(\cite{Sorbi:Brouwer}).
Similar results (as announced, with outlined proofs, in \cite{Sorbi:Quotient}) hold of
the dual of the Muchnik lattice: detailed proofs are provided in Section \ref{sct:dual}.

For all unexplained notions from computability theory, the reader is referred to Rogers
\cite{Rogers:Book}; our main source for Brouwer algebras and the algebraic semantics of
propositional calculi is Rasiowa-Sikorski \cite{Rasiowa-Sikorski:Book}. A comprehensive
survey on the Medvedev and Muchnik lattices, and their mutual relationships, can be found
in \cite{Sorbi:Medvedev-Survey}. Throughout the paper we use the symbols $\join$ and
$\meet$ to denote the join and meet operations, respectively, in any lattice.

\subsection{The Medvedev and the Muchnik lattices}
Although our main object of study is the Muchnik lattice, reference to the Medvedev
lattice will be sometimes useful. Therefore, we start by reviewing some basic definitions
and facts concerning both lattices. Following Medvedev \cite{Medvedev}, a \emph{mass
problem} is a set of functions from the set of natural numbers $\omega$, to itself. There
are two natural ways to extend Turing reducibility to mass problems: one could say,
following \cite{Medvedev}, that a mass problem $\A$ is \emph{reducible} to a mass problem
$\B$ (denoted by $\A \le \B$), if there is an oracle Turing machine by means of which
every function of $\B$, when supplied to the machine as an oracle, computes some function
of $\A$. (Any oracle Turing machine defines in this sense a partial mapping from
$\omega^\omega$ to $\omega^\omega$, called a \emph{partial computable functional}.) A
different approach, which consists in dropping uniformity, leads to \emph{Muchnik
reducibility}, \cite{Muchnik:Lattice}, denoted by $\le_w$: here $\A \le_w \B$, if for
every $g \in \B$ there is an oracle Turing machine which computes some $f \in \A$ when
given $g$ as an oracle. This amounts to saying that $\A \le_w \B$ if and only if for
every $g \in \B$ there is some $f \in \A$ such that $f \le_T g$. Both definitions may be
viewed as attempts at formalizing Kolmogorov's idea of a calculus of problems:
Kolmogorov's informal problems are now identified with mass problems; to ``solve'' a mass
problem means to find a computable member in it; $\A \le \B$ and $\A \le_w \B$ are then
formalizations of ``$\A$ is less difficult than $\B$'', as one can solve $\A$ given any
solution to $\B$. In the same vein, one can introduce a formal ``calculus'' of mass
problems, by defining $\mathcal{A} \join \mathcal{B}=\left\{ f \oplus g: f \in
\mathcal{A} \text{ and } g \in \mathcal{B} \right\}$, where
$$
f \oplus g(x)=\left\{
\begin{array}{ll}
f(y), & \hbox{if $x=2y$,} \\
g(y), & \hbox{if $x=2y+1$;}%
\end{array}
\right.
$$
and $\mathcal{A} \meet \mathcal{B}=\langle 0 \rangle \concat \mathcal{A} \cup \langle 1
\rangle\concat \mathcal{B}$, where in general, for $i \in \omega$ and a given mass
problem $\C$, $\langle i \rangle \concat \mathcal{C}= \left\{ \langle i \rangle \concat
f: f \in \mathcal{C} \right\}$, and $\langle i \rangle \concat f$ denotes the
concatenation of the string $\langle i\rangle$ with the function $f$. We see that $\A
\join \B$ has a solution if and only if both $\A$ and $\B$ have solutions; and $\A \meet
\B$ has a solutions if and only if at least one of them has. Being preordering relations,
both $\le$ and $\le_w$ give rise to degree structures: the equivalence class $\deg_M(\A)$
of a mass problem $\A$, under the equivalence relation $\equiv$ generated by $\le$, is
called the \emph{Medvedev degree} of $\A$; the equivalence class $\deg_w(\A)$ of a mass
problem $\A$, under the equivalence relation $\equiv_w$ generated by $\le_w$ is called
the \emph{Muchnik degree} of $\A$. The corresponding degree structures are not only
partial orders, but in fact bounded distributive lattices, with operations of join and
meet (still denoted by $+$ and $\times$) defined through the corresponding operations on
mass problems. It is easily seen that both lattices are distributive. The lattice of
Medvedev degrees is called the \emph{Medvedev lattice}, denoted by $\mathfrak{M}$; the
lattice of Muchnik degrees is called the \emph{Muchnik lattice}, denoted by
$\mathfrak{M}_w$. Finally the least element in both lattices is the degree of any mass
problem containing some computable function; and the greatest element is the degree of
the mass problem $\emptyset$.

A \emph{Muchnik mass problem} $\A$ is a mass problem satisfying: $f\in \A \text{ and }f
\le_T g \Rightarrow g \in \A$.
\begin{lemma}\label{lem:useful}
The following hold:
\begin{enumerate}
  \item\label{item:closure} for every mass problem $\A$, there is a unique Muchnik
      mass problem $\A^w$ such that $\A\equiv_w \A^w$:
  \item\label{item:un-int} $\M_w$ is a completely distributive complete lattice, with
      $\A \meet \B \equiv_w \A \cup \B$, and if $\A$ and $\B$ are Muchnik mass
      problems then $\A \join \B \equiv_w \A \cap \B$.
  \end{enumerate}
\end{lemma}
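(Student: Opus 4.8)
The plan is to prove the two items in turn, with item~(\ref{item:closure}) supplying the canonical representatives used throughout item~(\ref{item:un-int}).

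For item~(\ref{item:closure}) I would take $\A^w := \set{g : (\ex f \in \A)\, f \le_T g}$, the upward closure of $\A$ under Turing reducibility. That $\A^w$ is a Muchnik mass problem is immediate from transitivity of $\le_T$. The equivalence $\A \equiv_w \A^w$ is routine: $\A \le_w \A^w$ holds because every $g \in \A^w$ has, by definition, some $f \in \A$ with $f \le_T g$; and $\A^w \le_w \A$ holds because every $g \in \A$ already lies in $\A^w$, witnessed by itself. For uniqueness, suppose $\B$ is a Muchnik mass problem with $\B \equiv_w \A$, and show $\B = \A^w$ by double inclusion. If $g \in \B$ then $\A \le_w \B$ gives $f \in \A$ with $f \le_T g$, so $g \in \A^w$; conversely, if $g \in \A^w$ then $f \le_T g$ for some $f \in \A$, and $\B \le_w \A$ gives $h \in \B$ with $h \le_T f \le_T g$, whence $g \in \B$ by upward closure of $\B$.

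For item~(\ref{item:un-int}) I would first dispatch the two finite identities. For the meet, prefixing a fixed finite string gives $\la i \ra \concat f \equiv_T f$, so the two Muchnik reductions between $\A \meet \B = \la 0 \ra \concat \A \cup \la 1 \ra \concat \B$ and $\A \cup \B$ follow by matching $\la i \ra \concat h$ with $h$ in either direction. For the join, given Muchnik mass problems $\A,\B$, the reduction $\A \join \B \le_w \A \cap \B$ is witnessed by $h \mapsto h \oplus h$, which lies in $\A \join \B$ precisely because $h \in \A \cap \B$; while $\A \cap \B \le_w \A \join \B$ uses $f \le_T f \oplus g$ and $g \le_T f \oplus g$ together with upward closure of $\A$ and $\B$ to put $f \oplus g$ itself into $\A \cap \B$. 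This second direction is exactly where the Muchnik hypothesis is needed. To obtain completeness and complete distributivity, the clean route is to use item~(\ref{item:closure}) to identify each Muchnik degree with its canonical Muchnik mass problem, and to observe that for Muchnik mass problems $\A \le_w \B$ holds if and only if $\B \subseteq \A$: indeed $\A \le_w \B$ says every $g \in \B$ has some $f \in \A$ below it, which by upward closure of $\A$ is the same as $g \in \A$. Thus $\deg_w(\A) \mapsto \A^w$ is an order anti-isomorphism of $\M_w$ onto the family of Muchnik mass problems, equivalently the up-sets of $(\omega^\omega, \le_T)$, ordered by inclusion. Under it, meet becomes union and join becomes intersection, consistently with the finite identities, and arbitrary meets and joins are realized by arbitrary unions and intersections. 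Since up-sets of any poset are closed under arbitrary unions and intersections, this is a complete lattice; and since it is a complete sublattice of the power-set lattice (with joins and meets computed as genuine unions and intersections), it inherits complete distributivity from $\mathcal{P}(\omega^\omega)$.

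The one genuinely delicate point is the claim that arbitrary joins are computed as intersections. For a family $\set{\A_i}$ of Muchnik mass problems I must check that $\bigcap_i \A_i$ is the least upper bound. It is an upper bound, since $\A_j \le_w \bigcap_i \A_i$ for each $j$ (every $g \in \bigcap_i \A_i$ already lies in $\A_j$). For the ``least'' part, if $\C$ is a Muchnik mass problem with $\A_j \le_w \C$ for all $j$, then each $g \in \C$ has, for every $j$, some $f \in \A_j$ with $f \le_T g$; upward closure forces $g \in \A_j$ for all $j$, so $g \in \bigcap_i \A_i$, and the reduction $\bigcap_i \A_i \le_w \C$ is witnessed by $g \mapsto g$. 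This is precisely where passing to Muchnik mass problems is not cosmetic: the upward-closure property is what makes the infinite join be the intersection, and hence what drives the whole representation.
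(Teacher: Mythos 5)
Your proof is correct and takes essentially the same route as the paper: the upward Turing closure $\A^w$ is the canonical Muchnik representative, arbitrary infima and suprema of degrees are realized as unions and intersections of these up-sets, and complete distributivity is inherited from the set-theoretic representation. The paper's own proof is simply a terser version of this, stating the formulas for $\Meet$ and $\Join$ and omitting the uniqueness and least-upper-bound verifications that you spell out.
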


\begin{proof}
Define $\A^w=\left\{f: (\exists g \in \A)[g \le_T f]\right\}$. $\M_w$ is complete: if
$\set{\A_i: i \in I}$ is any collection of mass problems, then the infimum and the
supremum of the corresponding Muchnik degrees are given by
\begin{align*}
\Meet \set{\deg_w(\A_i): i \in I} &=\deg_w(\bigcup \set{\A_i: i \in I}),\\
\Join \set{\deg_w(\A_i): i \in I} &=\deg_w(\bigcap \set{\A_i^w: i \in I}).
\end{align*}
We will often extend the $\Meet$ and $\Join$ operations to mass problems by defining:
\begin{align*}
  \Meet \set{\A_i: i \in I}&=\bigcup \set{\A_i: i \in I} \\
  \Join \set{\A_i: i \in I}&=\bigcap \set{\A_i^w: i \in I}.
\end{align*}
Complete distributivity follows from the fact that infima and suprema are essentially
given by set theoretic unions and intersections.
\end{proof}

Both in $\mathfrak{M}$ and in $\mathfrak{M}_w$, a degree $\bfS$ is called a \emph{degree
of solvability} if it contains a singleton. The following considerations concerning
degrees of solvability apply to both $\M$ and $\M_w$: it is easy to see that the degrees
of solvability form an upper semilattice, with least element, which is isomorphic to the
upper semilattice, with least element, of the Turing degrees; for every degree of
solvability $\bfS$ there is a unique minimal degree $>\bfS$ that is denoted by $\bfS'$
(cf.\ Medvedev \cite{Medvedev}). If $\bfS = \deg_M(\{f\})$ then $\bfS'$ is the degree of
the mass problem
$$
\{f\}' = \bigset{\langle n \rangle \concat g: f <_T g \wedge \Phi_n(g) = f },
$$
where $\{\Phi_n\}_{n \in \omega}$ is an effective list of all partial computable
functionals; note further that for any $f$ we have $\{f\}' \equiv_w \{g\in \omega^\omega:
f<_T g \}$ so that in $\M_w$ we can use this simplified version of $\{f\}'$. In
particular, $\0' = \bigset{g: g>_T 0}$ is the minimal nonzero Muchnik degree.

\section{Brouwer algebras and intermediate propositional calculi} We now recall the
basic definitions and facts about Brouwer and Heyting algebras, and their applications to
propositional logics.

\begin{definition}\label{defn:Brouwer}
A distributive lattice $L$ with least and largest elements $0$ and $1$, and with
operations of join and sup denoted by $\join$ and $\meet$, respectively, is a {\em
Brouwer algebra\/} if for every pair of elements $a$ and $b$ there is a smallest element,
denoted by $a\imp b$, such that $a\join (a\imp b) \geq b$. Thus a Brouwer algebra can be
viewed as an algebraic structure with three binary operations $\join, \meet, \imp$,
together with the nullary operations $0,1$. For applications to propositional logic, it
is also convenient to enrich the signature of a Brouwer algebra with a further unary
operation $\neg$, given by $\neg a=a \imp 1$.
\end{definition}

Given a Brouwer algebra $L$, we can identify a propositional formula $\phi$, having $n$
variables, with an $n$-ary polynomial $p_\phi$ of $L$, in the restricted signature
$\langle \join, \meet, \imp, \neg\rangle$: the identification makes the propositional
connectives $\lor, \wedge, \imp, \neg$ correspond to the operations $\meet, \join, \imp,
\neg$ of $L$, respectively. (For polynomials in the sense of universal algebra, see for
instance \cite{Graetzer:Universal}.) The polynomial $p_\phi$ can in turn be considered as
a function $p_\phi:L^n \longrightarrow L$.

\begin{definition}\label{defn:evaluating} Let $L$ be a Brouwer algebra. A propositional
formula $\phi$ having $n$ variables is {\em true\/} in $L$ if $p_\phi(a_0, \ldots,
a_{n-1})=0$ for all $(a_0, \ldots, a_{n-1})\in L^n$. The set of all formulas that are
true in $L$ is denoted by $\Th(L)$.
\end{definition}

The dual notion is studied as well.

\begin{definition} \label{dual}
A distributive lattice $L$ with least and largest elements $0$ and $1$ is a {\em Heyting
algebra\/} if its dual $\dual{L}$ is a Brouwer algebra. That is, $a\imp b$ is the largest
element of $L$ such that $a \meet (a\imp b) \leq b$. A propositional formula is {\em
true\/} in the Heyting algebra $L$ if the polynomial $\dual{p}_\phi$, obtained from
$p_\phi$ by interchanging $\meet$ and $\join$, evaluates to $1$ under every valuation of
its variables with elements from $L$. The set of all formulas that are true in $L$ as a
Heyting algebra is denoted by $\Th_H(L)$. Note that $\Th_H(L) = \Th(\dual{L})$.
\end{definition}

\begin{lemma} \label{surj}
Suppose that $L_0$ and $L_1$ are Brouwer algebras, and suppose that $F: L_0
\longrightarrow L_1$ is a Brouwer homomorphism (i.e. a homomorphism of bounded lattices,
which also preserves $\rightarrow$).
\begin{enumerate}
\item If $F$ is injective then $\Th(L_1) \subseteq \Th(L_0)$.
\item If $F$ is surjective then $\Th(L_0) \subseteq \Th(L_1)$.
\end{enumerate}
\end{lemma}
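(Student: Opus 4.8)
The plan is to prove the two implications by tracking how truth of formulas transfers along a Brouwer homomorphism. The key observation is that a Brouwer homomorphism $F$, by definition, commutes with all the operations $\join, \meet, \imp$ and with the constants $0,1$, and hence also with $\neg$ (since $\neg a = a \imp 1$). Consequently $F$ commutes with every polynomial $p_\phi$ in the signature $\langle \join, \meet, \imp, \neg\rangle$: for any tuple $(a_0,\ldots,a_{n-1}) \in L_0^n$ we have
\[
F\bigl(p_\phi(a_0,\ldots,a_{n-1})\bigr) = p_\phi\bigl(F(a_0),\ldots,F(a_{n-1})\bigr).
\]
This single identity, which follows by a routine induction on the structure of $\phi$ using the homomorphism conditions at each connective, is the engine of both parts.

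For part (1), suppose $F$ is injective and $\phi \in \Th(L_1)$. Let $(a_0,\ldots,a_{n-1}) \in L_0^n$ be arbitrary. Since $\phi$ is true in $L_1$, we have $p_\phi(F(a_0),\ldots,F(a_{n-1})) = 0$, and by the commutation identity this equals $F(p_\phi(a_0,\ldots,a_{n-1}))$. But $F$ is a bounded-lattice homomorphism, so $F(0)=0$; thus $F(p_\phi(a_0,\ldots,a_{n-1})) = F(0)$, and injectivity yields $p_\phi(a_0,\ldots,a_{n-1}) = 0$. As the tuple was arbitrary, $\phi \in \Th(L_0)$.

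For part (2), suppose $F$ is surjective and $\phi \in \Th(L_0)$. Given any tuple $(b_0,\ldots,b_{n-1}) \in L_1^n$, surjectivity provides preimages $a_i \in L_0$ with $F(a_i) = b_i$. Then
\[
p_\phi(b_0,\ldots,b_{n-1}) = p_\phi(F(a_0),\ldots,F(a_{n-1})) = F\bigl(p_\phi(a_0,\ldots,a_{n-1})\bigr) = F(0) = 0,
\]
using that $\phi$ is true in $L_0$ and that $F(0)=0$. Hence $\phi$ is true in $L_1$, so $\phi \in \Th(L_1)$.

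The only genuinely substantive step is the commutation identity, and even that is a standard structural induction; the main point to be careful about is that $\imp$ must be preserved \emph{exactly} (not merely as an inequality), which is exactly what the hypothesis that $F$ is a Brouwer homomorphism guarantees — lattice homomorphisms need not in general preserve the relative pseudo-complement, so this hypothesis is essential and cannot be weakened to a bare bounded-lattice homomorphism. Once the identity is in hand, both parts are immediate, with injectivity used to pull $F(x)=F(0)$ back to $x=0$ in part (1) and surjectivity used to produce preimages in part (2).
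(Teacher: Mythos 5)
Your proof is correct. The paper gives no argument of its own here --- it simply cites Rasiowa--Sikorski --- and your structural induction (a Brouwer homomorphism commutes with every polynomial $p_\phi$, injectivity then reflects $p_\phi=0$ back to $L_0$, surjectivity pushes it forward to $L_1$) is exactly the standard argument that citation points to, including the correct use of the convention that truth means evaluating to $0$.
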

\begin{proof}
See \cite{Rasiowa-Sikorski:Book}.
\end{proof}

Given $a<b$ in a Brouwer algebra $L$, $L[a,b]$ denotes the interval
$$
[a,b]=\set{x \in  L: a \le x \le b}.
$$
We abbreviate $L[0,b]$ with $L(\le b)$, and we abbreviate $L[a,1]$ with $L(\ge a)$.

\begin{lemma}\label{lem:intervals}
Suppose that $L$ is a Brouwer algebra, and let $a,b\in L$ be such that $a<b$. Then
$L[a,b]$ is again a Brouwer algebra.
\end{lemma}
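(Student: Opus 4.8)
The plan is to check the requirements of Definition~\ref{defn:Brouwer} for $L[a,b]$ one at a time: that it is a bounded distributive lattice, and that the relative implication exists. The lattice-theoretic part is routine, so the real substance lies in exhibiting the implication and, above all, in showing that the natural candidate for it stays inside the interval.

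First I would note that $L[a,b]$ is a sublattice of $L$: if $x,y\in[a,b]$ then $a\le x\meet y$ and $x\join y\le b$, so both $x\meet y$ and $x\join y$ again lie in $[a,b]$. Hence $L[a,b]$ inherits distributivity from $L$, and it is bounded, with least element $a$ and largest element $b$. It then remains to produce, for each $x,y\in[a,b]$, the smallest $z\in[a,b]$ with $x\join z\ge y$. My candidate is
$$
x\imp_{[a,b]}y:=(x\imp y)\join a,
$$
where $\imp$ denotes the implication of the ambient algebra $L$.

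The key point is that this candidate actually lies in $[a,b]$, and here I would use that $x\imp y\le b$. Indeed, since $y\le b$ we have $x\join b\ge b\ge y$, so $b$ belongs to the set of $c$ with $x\join c\ge y$; as $x\imp y$ is by definition the least such $c$ in $L$, it follows that $x\imp y\le b$, whence $(x\imp y)\join a\le b$, while trivially $(x\imp y)\join a\ge a$. The two remaining verifications are then formal: from $x\join(x\imp y)\ge y$ we get $x\join\bigl((x\imp y)\join a\bigr)\ge y$; and if $z\in[a,b]$ satisfies $x\join z\ge y$, then $z\ge x\imp y$ by minimality of $x\imp y$ in $L$, and $z\ge a$ since $z\in[a,b]$, so $z\ge(x\imp y)\join a$. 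Thus $(x\imp y)\join a$ is the least element of $[a,b]$ with the defining property, and $L[a,b]$ is a Brouwer algebra. (Specialising $y$ to the top $b$ of the interval gives the relative negation $\neg_{[a,b]}x=(x\imp b)\join a$.)

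I expect the only genuine obstacle to be the containment $x\imp y\le b$: it is precisely what makes the formula $(x\imp y)\join a$ well defined as an element of $[a,b]$, and without it the candidate could escape the interval. Once that inequality is in place, everything else reduces to the universal property of $\imp$ in $L$ together with the fact that the elements of $[a,b]$ satisfying $x\join z\ge y$ are exactly those lying above $x\imp y$.
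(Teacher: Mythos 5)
Your proof is correct, and in spirit it is the same as the paper's: both prove the lemma by exhibiting the relative arrow on $[a,b]$ via an explicit formula built from the arrow of $L$. But the two formulas differ, and yours is the right one. The paper's proof states $x \imp_{[a,b]} y = x \join (x \imp y)$, which cannot be correct as printed: since $x \imp x = 0$ in any Brouwer algebra, that formula would give $x \imp_{[a,b]} x = x \join 0 = x$, whereas the arrow of the interval must satisfy $x \imp_{[a,b]} x = a$ (the bottom of $[a,b]$), because already $x \join a \geq x$. The element $x \join (x \imp y)$ does lie in $[a,b]$ and does satisfy the inequality $x \join \bigl(x \join (x \imp y)\bigr) \geq y$, but it fails the minimality requirement, so the paper's one-line proof, read literally, does not establish the lemma. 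Your formula $(x \imp y) \join a$ is the correct relative arrow: as you observe, $\{z \in L : x \join z \geq y\}$ is exactly the principal filter of elements above $x \imp y$, so its intersection with $[a,b]$ has least element $(x \imp y) \join a$, and the one nontrivial point is the inequality $x \imp y \leq b$ guaranteeing that this element does not escape the interval --- precisely the point you isolate and prove. (Your formula is also the one forced by Lemma~\ref{hom}: under the Brouwer homomorphism $u \mapsto u \join a$ there, the arrow $u \imp v$ must be carried to $(u \imp v) \join a$.) In short, the paper's displayed formula is evidently a typo for yours, and your write-up additionally supplies the three verifications --- membership in $[a,b]$, the defining inequality, and minimality --- that the paper omits.
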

\begin{proof}
Let $\rightarrow$ be the arrow operation in $L$. Then the arrow operation $\imp_{[a,b]}$
in $L[a,b])$ is given by
$$
x \imp_{[a,b]} y= x \join (x \imp y).
$$
\end{proof}

\begin{lemma} \label{hom}
Let $L$ be a Brouwer algebra and let $a,b,c\in L$ such that $c \join a = b$. Then the
mapping $f(x) = x \join a$ is a Brouwer homomorphism of $L(\leq c)$ onto $L[a,b]$. As a
consequence, $\Th(L(\leq c)) \subseteq \Th(L[a,b])$.
\end{lemma}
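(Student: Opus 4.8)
The plan is to verify that $f(x)=x\join a$ is a surjective Brouwer homomorphism from $L(\le c)$ onto $L[a,b]$, and then to obtain the stated inclusion of theories from Lemma~\ref{surj}(2). First I would confirm that $f$ maps into the right interval: if $0\le x\le c$, then $a\le x\join a\le c\join a=b$, so $f(x)\in[a,b]$; moreover $f(0)=a$ and $f(c)=c\join a=b$, so $f$ carries the least and greatest elements of $L(\le c)$ to those of $L[a,b]$. Preservation of $\join$ is immediate from the lattice axioms, and preservation of $\meet$ is exactly the distributive law $(x\join a)\meet(y\join a)=(x\meet y)\join a$, available since $L$ is distributive.

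For surjectivity I would produce an explicit preimage: given $y\in[a,b]$, put $x=y\meet c\in[0,c]$. Then, using distributivity together with $a\le y\le b=c\join a$,
\[
f(y\meet c)=(y\meet c)\join a=(y\join a)\meet(c\join a)=y\meet b=y,
\]
so $f$ is onto $L[a,b]$.

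The main obstacle is the preservation of $\imp$. By Lemma~\ref{lem:intervals} the arrow of $L(\le c)=L[0,c]$ is $x\imp_{[0,c]}y=x\join(x\imp y)$ and the arrow of $L[a,b]$ is $u\imp_{[a,b]}v=u\join(u\imp v)$, where $\imp$ is the arrow of $L$. Taking $u=x\join a$ and $v=y\join a$, the identity I must establish is
\[
x\join a\join(x\imp y)=(x\join a)\join\big((x\join a)\imp(y\join a)\big),
\]
and I would prove it by two inequalities, each invoking only the defining property that $p\imp q$ is the least $z$ with $p\join z\ge q$. Write $w=(x\join a)\imp(y\join a)$. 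For one direction, the element $z_0=x\join a\join(x\imp y)$ satisfies $(x\join a)\join z_0\ge y\join a$, since $z_0\ge x\imp y$ gives $x\join z_0\ge y$ and $z_0\ge a$; hence $w\le z_0$, so the right-hand side lies below the left. For the other direction, from $x\join a\join w\ge y\join a\ge y$ it follows that $a\join w$ witnesses $x\join(a\join w)\ge y$, whence $x\imp y\le a\join w$ and therefore the left-hand side lies below $x\join a\join w$. The two inequalities yield the identity, so $f$ preserves $\imp$.

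Since $f$ is thus a surjective Brouwer homomorphism of $L(\le c)$ onto $L[a,b]$, Lemma~\ref{surj}(2) gives $\Th(L(\le c))\subseteq\Th(L[a,b])$, completing the argument.
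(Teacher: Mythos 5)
Your overall route --- checking directly that $f(x)=x\join a$ is a surjective Brouwer homomorphism and then citing Lemma~\ref{surj}(2) --- cannot be compared line by line with the paper, because the paper offers no proof at all: it simply refers to Lemma~4 of Skvortsova's paper. A self-contained verification is therefore exactly what is wanted here. Your checks of the bounds ($f(0)=a$, $f(c)=c\join a=b$), of preservation of $\join$ and (via distributivity) of $\meet$, and of surjectivity through the preimage $y\meet c$ are all correct.

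The step that needs repair is preservation of $\imp$, and the culprit is the formula you quote from Lemma~\ref{lem:intervals}. As printed there, $x\imp_{[a,b]}y=x\join(x\imp y)$ is a misprint for $a\join(x\imp y)$: the element $x\join(x\imp y)$ does lie in $[a,b]$ and does satisfy the defining inequality, but it need not be the \emph{least} such element. For instance, in the four-element Boolean algebra with incomparable atoms $p$ and $q$, taking $[a,b]=[0,1]$, $x=p$, $y=1$, one has $x\imp y=q$ (which is the interval arrow, since here $[a,b]$ is all of $L$), whereas $x\join(x\imp y)=1$. Consequently the arrow of $L(\le c)$ is just $\imp$ itself (note $x\imp y\le y\le c$), the arrow of $L[a,b]$ is $u\imp_{[a,b]}v=a\join(u\imp v)$, and what must be proved is
\[
(x\imp y)\join a \;=\; a\join\big((x\join a)\imp(y\join a)\big),
\]
rather than your displayed identity, which is this equation joined with $x$ on both sides; your identity is true, but since joins do not cancel, it does not by itself express preservation of the arrow. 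The good news is that your two inequalities already contain everything needed. Writing $w=(x\join a)\imp(y\join a)$: your second direction shows $x\imp y\le a\join w$, hence $(x\imp y)\join a\le a\join w$; and your first direction's reasoning, applied to $z_1=a\join(x\imp y)$ in place of $z_0$ (again $x\join z_1\ge y$ and $z_1\ge a$, so $(x\join a)\join z_1\ge y\join a$), shows $w\le z_1$ and hence $a\join w\le(x\imp y)\join a$. So your argument is sound in substance; you only need to replace the misprinted interval-arrow formula with the correct one and rerun the same two estimates.
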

\begin{proof}
See \cite[Lemma~4]{Skvortsova}.
\end{proof}

\begin{lemma} \label{surjectivity}
Let $L$ be a distributive lattice, and suppose that $x \le y$ and $z$ is arbitrary. Then
the mapping $c \mapsto c \meet z$ is a surjective homomorphism from the interval $[x,y]$
onto the interval $[x\meet z, y\meet z]$.
\end{lemma}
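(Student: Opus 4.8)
The plan is to establish the three ingredients hidden in the statement: that $c \mapsto c \meet z$ maps the interval $[x,y]$ into $[x \meet z, y \meet z]$, that it is a lattice homomorphism, and that it is onto. For well-definedness I would simply invoke monotonicity of $\meet$: from $x \le c \le y$ one obtains $x \meet z \le c \meet z \le y \meet z$, so the image does land in the advertised interval.

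For the homomorphism property I would treat the two operations separately. Preservation of meet, namely $(c \meet d) \meet z = (c \meet z) \meet (d \meet z)$, follows at once from commutativity, associativity, and idempotence of $\meet$, and uses no distributivity. Preservation of join, namely $(c \join d) \meet z = (c \meet z) \join (d \meet z)$, is precisely the distributive law, and this is the single place where the hypothesis that $L$ is distributive is used.

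The only step calling for an idea is surjectivity, where the plan is to write down an explicit preimage. Given $w \in [x \meet z, y \meet z]$, I would set $c = w \join x$. This lies in $[x,y]$: clearly $c \ge x$, while $w \le y \meet z \le y$ together with $x \le y$ forces $c = w \join x \le y$. To check that $c$ maps to $w$, I would compute $c \meet z = (w \join x) \meet z = (w \meet z) \join (x \meet z)$ by distributivity; since $w \le y \meet z \le z$ we have $w \meet z = w$, and since $x \meet z \le w$ we have $w \join (x \meet z) = w$, whence $c \meet z = w$. The main (and essentially only) obstacle is thus bookkeeping with the distributive manipulations; once the candidate $w \join x$ is spotted, the rest is a short calculation.
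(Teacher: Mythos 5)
Your proof is correct and takes essentially the same route as the paper: the paper also dispatches the homomorphism property as immediate and proves surjectivity by exhibiting the preimage $x \join (u \meet y)$ of an element $u \in [x \meet z, y \meet z]$. Since $u \le y$ forces $u \meet y = u$, that preimage coincides with your $w \join x$, so the two arguments are the same calculation.
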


\begin{proof}
It is obvious that the mapping is a lattice-theoretic homomorphism. Surjectivity follows
from the fact that if $x \meet z \le u \le y \meet z$ then $u$ is the image of $x\join
(u\meet y)$.
\end{proof}

\subsection{The Medvedev and the Muchnik lattices as Brouwer algebras}

Examples of Brouwer algebras are provided by $\M$ (Medvedev \cite{Medvedev}), $\M_w$
(Muchnik \cite{Muchnik:Lattice}), and the dual $\dual{\M_w}$ (\cite{Sorbi:Someremarks}):

\begin{lemma}\label{lem:both}
The Muchnik lattice $\mathfrak{M}_w$ is both a Brouwer algebra and a Heyting algebra. The
Medvedev lattice $\mathfrak{M}$ is a Brouwer algebra.
\end{lemma}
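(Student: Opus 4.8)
The plan is to prove each statement by exhibiting the relevant $\imp$ operation explicitly and verifying that it satisfies the defining property of a Brouwer (resp.\ Heyting) algebra. Since we already know from Lemma~\ref{lem:useful} that $\M_w$ is a completely distributive complete lattice with least and largest elements, the only thing that remains is to produce, for each pair of Muchnik degrees, the appropriate residual. Throughout I would work with Muchnik mass problems as representatives, using that $\A\meet\B\equiv_w\A\cup\B$ and $\A\join\B\equiv_w\A\cap\B$ for Muchnik mass problems.

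First I would treat $\M_w$ as a Brouwer algebra. Here the join of the lattice is $\meet$ (union), so I need, for degrees $a=\deg_w(\A)$ and $b=\deg_w(\B)$, the \emph{smallest} degree $a\imp b$ with $a\meet(a\imp b)\ge b$. Reading through the identifications, $a\meet c\ge b$ means $\B\le_w\A\cup\C$, i.e.\ every $f$ computing no member of $\A$ already computes a member of $\B$. The natural candidate is
$$
\A\imp\B=\bigset{f: (\forall g\le_T f)\,[g\in\A^w \text{ or } g\oplus f\in\B^w]},
$$
or an equivalent Muchnik-closed set built from the condition ``$f$ together with any oracle not solving $\A$ solves $\B$''. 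I would verify (i) that this $\C$ satisfies $\A\cup\C\ge_w\B$, and (ii) that it is below every other $\C'$ with the same property, which gives minimality. Dually, for the Heyting structure I need $a\imp b$ to be the \emph{largest} $c$ with $a\join c\le b$; since $\join$ is intersection, $a\join c\le b$ reads $\A\cap\C\le_w\B$ wait—$\ge_w$—$\A\cap\C\ge_w\B$, and the analogous explicit witness is a union-closed family of functions reducing $\B$ modulo $\A$.

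For the Medvedev case the same strategy applies but with the uniform reducibility $\le$, and the witness must be given by actual partial computable functionals rather than mere existence of reductions; Medvedev \cite{Medvedev} already established this, so I would simply point to the known formula for the relative implication and check it transports unchanged.

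The main obstacle I expect is getting the quantifier structure of the candidate $\imp$ exactly right so that the resulting mass problem is simultaneously (a) large enough that $a\meet(a\imp b)\ge b$ holds and (b) small enough to be the \emph{least} such degree, and then checking that in the Muchnik setting this candidate is genuinely Muchnik-closed (closed upward under $\le_T$). Establishing minimality—that any competing $c'$ dominates our $c$—is where the non-uniformity of $\le_w$ must be used carefully, since unlike the Medvedev case we may choose reductions pointwise. Once the correct closed-form expression for $\A\imp\B$ is in hand, the verification is a routine unwinding of the definitions of $\le_w$ and of the lattice operations.
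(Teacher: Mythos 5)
There is a genuine gap, and it sits at the very foundation of your argument: you have the defining conditions of the Brouwer and Heyting structures swapped. In this paper's conventions (Definitions \ref{defn:Brouwer} and \ref{dual}), the Brouwer residual $a\imp b$ is the \emph{least} $c$ with $a\join c\ge b$, where $\join$ is the lattice \emph{join} -- on mass problems the $\oplus$-based operation, which on Muchnik-closed sets is intersection -- while the Heyting residual is the \emph{largest} $c$ with $a\meet c\le b$, where $\meet$ is the meet (union). You assert that ``the join of the lattice is $\meet$ (union)'' and then work with the condition $\B\le_w \A\cup\C$. This is not merely the wrong condition; it makes the construction impossible: $\B\le_w\A\cup\C$ forces $\B\le_w\A$ regardless of $\C$, so whenever $\B\not\le_w\A$ there is \emph{no} $\C$ satisfying it, hence no least one, and the operation you set out to define does not exist. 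The same swap propagates into your treatment of the Heyting case, where your translations of the order relation flip between $\le_w$ and $\ge_w$.

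Your explicit candidate fails independently of the swap. In $\set{f: (\forall g\le_T f)[g\in\A^w \text{ or } g\oplus f\in\B^w]}$ the quantifier ranges over $g\le_T f$, so $g\oplus f\equiv_T f$; since for $\A$ of nonzero degree no computable $g$ lies in $\A^w$, the set collapses to exactly $\B^w$. That is indeed a candidate, because $\A\join\B\ge_w\B$ always, but it is far from least (if $\A\ge_w\B$ the true residual is $\mathbf{0}$), so minimality -- the point you yourself flag as the crux -- fails. The correct formula, which the paper gives, quantifies over members of $\A$: $\A\imp\B=\set{f: (\forall g\in\A)(\exists h\in\B)[h\le_T g\oplus f]}$, i.e.\ ``$f$ together with any solution of $\A$ computes a solution of $\B$'' (your gloss ``any oracle \emph{not} solving $\A$'' is backwards). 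Finally, you hold the cleanest proof in your hands without using it: since Lemma \ref{lem:useful} gives completeness and complete distributivity, one may simply set $\A\imp\B=\Meet\set{\C: \B\le_w \A\join\C}$; complete distributivity guarantees that this infimum is itself a candidate, and it is trivially the least, which settles the Brouwer structure of $\M_w$ and, dually, the Heyting structure at once. This abstract argument, together with Medvedev's explicit formula for $\M$ (which you correctly propose to cite), is essentially the paper's proof.
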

\begin{proof}
$\mathfrak{M}_w$ is a Brouwer algebra (\cite{Muchnik:Lattice}), and a Heyting algebra
(\cite{Sorbi:Someremarks}) since it is a completely distributive complete lattice. To
show that $\mathfrak{M}_w$ is a Brouwer algebra, take for instance, on mass problems,
$$
\A \rightarrow \B=\Meet \{\C: \B \le \A \join \C\}.
$$
To show that $\mathfrak{M}$ is a Brouwer algebra (\cite{Medvedev}), on mass problems $\A,
\B$, define
$$
\A \rightarrow \B=\left\{\langle n \rangle \concat f: (\forall g\in \A)[\Phi_n(g\oplus f)\in \B]
\right\}:
$$
it is immediate that $\B \le \A \join (\A \rightarrow \B)$, and
$$
\B \le \A \join \C \Leftrightarrow \A \rightarrow \B \le \C.
$$
Since Muchnik reducibility is a nonuniform version of
Medvedev reducibility, we can also notice that
for the $\rightarrow$ operation in the Muchnik lattice as a Brouwer algebra, one can take
$$
\A \rightarrow \B=\set{f: (\forall g \in \A)(\exists h \in \B)[h \le_T g \oplus f]}.
$$
In terms of the calculus of problems, we observe that with these definitions of
$\rightarrow$, for both Medvedev and Muchnik reducibility one has that $\A \rightarrow
\B$ is a mass problem such that any solution to it, together with any solution to $\A$,
gives a solution to $\B$.
\end{proof}

For either $\M$ or $\M_w$, Definition \ref{defn:evaluating} amounts to saying that a
propositional sentence is valid if and only if every substitutions of mass problems to
the propositional variables in the sentence yields a solvable problem. Let $\IPC$ denote
the intuitionistic propositional calculus (see \cite{Rasiowa-Sikorski:Book} for a
suitable definition of axioms and rules of inference), and let $\Jan$ be the intermediate
propositional logic obtained by adding to $\IPC$ the so called \emph{weak law of excluded
middle}, i.e. the axiom scheme $\neg \alpha \lor \neg \neg \alpha$, where $\alpha$ is any
propositional sentence. It is known (Medvedev \cite{Medvedev:finite}, Jankov
\cite{Jankov:Weak}, Sorbi \cite{Sorbi:Brouwer}) that $\Th(\M)=\Jan$. Also,
$\Th(\M_w)=\Jan$ (announced in \cite{Sorbi:Brouwer}).

By lattice theory, if $L$ is a Brouwer algebra, then the Brouwer algebra $L(\le b)$ is
lattice isomorphic to the quotient lattice obtained by dividing $L$ modulo the principal
filter generated by $b$; likewise, $L(\ge a)$ is isomorphic to the quotient lattice
obtained by dividing $L$ modulo the principal ideal generated by $a$. The difference
between these two quotients, see e.g. \cite{Rasiowa-Sikorski:Book}, is that congruences
given by ideals are also congruences of Brouwer algebras, and thus there is a surjective
Brouwer homomorphism from $L$ into $L(\ge a)$, giving $\Th(L) \subseteq \Th(L(\ge a))$ by
Lemma \ref{surj}. In order to find exact interpretations of $\IPC$ in terms of mass
problems, one should then turn attention to initial segments of the Medvedev lattice,
i.e. to Brouwer algebras of the form  $\M(\le \mathbf{A})$, where $\mathbf{A}$ is a
nonzero Medvedev degree.

\begin{theorem} {\rm (Skvortsova \cite{Skvortsova})} \label{Skvortsova}
There exists $\mathbf{A}$ such that $\Th(\M(\le \mathbf{A}))=\IPC$.
\end{theorem}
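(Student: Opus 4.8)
The plan is to prove the two inclusions $\IPC \subseteq \Th(\M(\le \mathbf{A}))$ and $\Th(\M(\le \mathbf{A})) \subseteq \IPC$ separately, for a carefully constructed degree $\mathbf{A}$. The first inclusion is free: $\IPC$ is sound for every Brouwer algebra, and $\M(\le \mathbf{A})$ is a Brouwer algebra by Lemma~\ref{lem:intervals}, so every theorem of $\IPC$ is true in it no matter which $\mathbf{A} > \0$ we pick. All the work goes into the reverse inclusion, which amounts to refuting, inside $\M(\le \mathbf{A})$, every formula that is not a theorem of $\IPC$. Here I would invoke the finite model property of $\IPC$ in its Brouwer-algebra form: a formula is a theorem of $\IPC$ if and only if it is true in every finite Brouwer algebra. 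Fixing an enumeration $B_0, B_1, \ldots$ of all finite Brouwer algebras up to isomorphism, this says $\IPC = \bigcap_n \Th(B_n)$, so it suffices to arrange $\Th(\M(\le \mathbf{A})) \subseteq \Th(B_n)$ for every $n$.

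To secure each inclusion $\Th(\M(\le \mathbf{A})) \subseteq \Th(B_n)$ I would produce a surjective Brouwer homomorphism from $\M(\le \mathbf{A})$ onto a copy of $B_n$ and appeal to Lemma~\ref{surj}(2). The interval lemmas make this economical. Applying Lemma~\ref{hom} inside the Brouwer algebra $L = \M(\le \mathbf{A})$ with $c = \mathbf{A}$ and any $a \le \mathbf{A}$, the map $x \mapsto x \join a$ is a Brouwer homomorphism of $\M(\le \mathbf{A})$ onto the upper interval $\M[a, \mathbf{A}]$, since $\mathbf{A} \join a = \mathbf{A}$. Thus whenever $\M[a_n, \mathbf{A}] \cong B_n$ as Brouwer algebras we obtain $\Th(\M(\le \mathbf{A})) \subseteq \Th(\M[a_n, \mathbf{A}]) = \Th(B_n)$, and intersecting over $n$ yields $\Th(\M(\le \mathbf{A})) \subseteq \IPC$. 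The whole problem is thereby reduced to a single realizability statement: construct one Medvedev degree $\mathbf{A}$ together with degrees $a_n \le \mathbf{A}$ so that the members of a complete family of finite Brouwer algebras occur, up to isomorphism, as upper intervals $\M[a_n, \mathbf{A}]$ all sharing the fixed top $\mathbf{A}$.

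This realizability statement is the heart of the argument and where the genuine computability-theoretic work lies. I would build $\mathbf{A}$ as the degree of a mass problem assembled from an infinite supply of mutually sufficiently generic (pairwise Turing-independent) blocks, devoting one block of building material to each $B_n$ and letting $a_n$ be the degree obtained by discarding the $n$-th block. The order, join, and meet inside $\M[a_n, \mathbf{A}]$ should then mirror the combinatorics of $B_n$ by a routine independence argument; the delicate point is the implication. By Lemma~\ref{lem:intervals} the arrow in the interval is $x \imp_{[a_n,\mathbf{A}]} y = x \join (x \imp y)$, with $\imp$ computed in $\M$, so one must verify that the Medvedev implications among the chosen mass problems evaluate exactly as the Heyting implications of $B_n$ require. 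Keeping this matching correct is precisely what forces the underlying degrees to be generic, and it is the main obstacle: the blocks must be independent enough that no unintended reduction collapses an implication, yet all must live below the single degree $\mathbf{A}$, so that the intervals cut out for different $B_n$ do not interfere. A recursively presented complete family, such as Jaśkowski's sequence of finite frames generated by simple operations, would let this simultaneous construction proceed uniformly.

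Finally, I would note why passing to a proper initial segment is essential rather than incidental: the full lattice satisfies $\Th(\M) = \Jan$, properly extending $\IPC$ by the weak law of excluded middle. Since the family $\{B_n\}$ includes finite Brouwer algebras refuting that axiom, the construction drives the theory strictly down from $\Jan$ to exactly $\IPC$, which is the content of the theorem.
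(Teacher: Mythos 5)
Your skeleton is the right one, and it is essentially the skeleton of the known proofs: the inclusion $\IPC \subseteq \Th(\M(\le \mathbf{A}))$ is indeed free, completeness of $\IPC$ with respect to finite Brouwer algebras (Theorem~\ref{JMcT}) is the right starting point, and using Lemma~\ref{hom} --- which is in fact Skvortsova's own Lemma~4 --- to map the initial segment $\M(\le \mathbf{A})$ onto intervals realizing the finite algebras is exactly how both Skvortsova's proof and this paper's proof of the Muchnik analogue (Theorem~\ref{thm:firstgood}) proceed. The problem is the step you dismiss as a ``routine independence argument.''

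You reduce the theorem to realizing \emph{every} finite Brouwer algebra $B_n$, up to isomorphism, as an interval $\M[a_n,\mathbf{A}]$ with common top $\mathbf{A}$. No such general realization theorem is known, and it cannot be had by an appeal to mutually generic ``blocks'': which finite distributive lattices occur as intervals of these degree structures is a delicate question, and in the closely related Muchnik lattice the unrestricted claim is provably \emph{false} --- by Theorem~\ref{characterization}, a double diamond-like lattice is never isomorphic to an interval of $\M_w$. This is precisely why Section~\ref{sec:algebras} of the paper exists: one must first replace the class of all finite Brouwer algebras by a restricted class that is still complete for $\IPC$ --- Ja\'skowski's sequence, shown via Lemma~\ref{preservation} to consist of weakly projective (non-dd-like) algebras, giving Corollary~\ref{corBrouwer} --- and only for such a class can the interval realization be carried out, via a genuine embedding theorem resting on Lachlan--Lebeuf for the Turing degrees, hardly a routine construction. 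You mention Ja\'skowski's sequence only as a convenience for uniform presentation; in fact restricting to such a family is mathematically forced. Moreover, even granting the realizations, arranging them compatibly is itself a construction with real content: the paper must choose the embeddings so that conditions \eqref{eigenschap} and \eqref{incomp} hold, and then prove Lemmas~\ref{lem:interval} and~\ref{lemmajoin} to paste the intervals together; note also that in that construction the bottoms $\hat{\X}_n$ are \emph{not} below the common degree $\hat{\Y}$ (otherwise $\hat{\Y}_n \equiv_w \hat{\Y}$), so the homomorphic images are intervals $[\hat{\X}_n, \hat{\Y}\join\hat{\X}_n]$ with varying tops, not intervals $[a_n,\mathbf{A}]$ ending at the top degree as you require --- your version is strictly stronger still. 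The realization-plus-pasting you defer is the entire theorem.
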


It is still an open problem (raised by Skvortsova \cite[p.134]{Skvortsova}) whether there
is a Medvedev degree $\mathbf{A}$ that is the infimum of finitely many Muchnik degrees
(i.e. Medvedev degrees containing Muchnik mass problems) such that $\Th(\M(\le
\mathbf{A}))$ coincides with $\IPC$. The paper \cite{Sorbi-Terwijn:Intermediate} is
dedicated to initial segments of the Medvedev lattice and their theories as intermediate
propositional logics.

\section{Capturing $\IPC$ with Brouwer and Heyting algebras}\label{sec:algebras}

Consider the following classic result about $\IPC$ due to McKinsey and Tarski, that
provides an algebraic semantics for $\IPC$ using Brouwer algebras. (The result also
follows from the results in Ja\'{s}kowski \cite{Jaskowski}).

\begin{theorem} {\rm (Ja\'{s}kowski \cite{Jaskowski},
McKinsey and Tarski \cite{McKinseyTarski})}  \label{JMcT}
\begin{align*}
  \IPC &= \bigcap\bigset{\Th(B) : B \text{ a finite Brouwer algebra}}.\\
   &= \bigcap\bigset{\Th_H(H) : H \text{ a finite Heyting algebra}}.
\end{align*}
\end{theorem}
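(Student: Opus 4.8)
The plan is to derive both displayed equalities from soundness and completeness of $\IPC$ with respect to \emph{finite Heyting algebras}, and then to transfer to Brouwer algebras by duality. Since $\Th_H(H)=\Th(\dual{H})$ by Definition~\ref{dual}, and since $H\mapsto\dual{H}$ is a bijection between finite Heyting algebras and finite Brouwer algebras, the two intersections in the statement coincide; it therefore suffices to prove the second equality, namely $\IPC=\bigcap\bigset{\Th_H(H):H\text{ a finite Heyting algebra}}$. For soundness, $\IPC\subseteq\bigcap\Th_H(H)$, I would argue by induction on $\IPC$-derivations: under any valuation into a Heyting algebra each axiom evaluates to the top element $1$ (using the adjunction $a\wedge c\le b\iff c\le(a\imp b)$ that defines the Heyting implication), and modus ponens and substitution preserve the property of evaluating to $1$. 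This direction is routine and holds for arbitrary, not necessarily finite, Heyting algebras.

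For completeness, $\bigcap\Th_H(H)\subseteq\IPC$, I would prove the contrapositive via the finite model property, obtained by the Lindenbaum--Tarski construction followed by filtration. First, form the Lindenbaum--Tarski algebra $\mathcal{L}$ of $\IPC$, whose elements are the $\IPC$-provable-equivalence classes $[\phi]$, ordered by $[\phi]\le[\psi]\iff\IPC\vdash\phi\imp\psi$. Standard arguments show that $\mathcal{L}$ is a Heyting algebra in which $[\phi]=1$ iff $\IPC\vdash\phi$, and that the canonical valuation $v(p)=[p]$ satisfies $\overline{v}(\phi)=[\phi]$ for every formula. Thus if $\IPC\not\vdash\phi$ then $\overline{v}(\phi)=[\phi]<1$, refuting $\phi$; the difficulty is only that $\mathcal{L}$ is infinite.

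To finitize, let $\Sigma$ be the finite set of subformulas of $\phi$ and let $D$ be the bounded sublattice of $\mathcal{L}$ generated by $\set{v(\psi):\psi\in\Sigma}\cup\set{0,1}$. As the bounded sublattice of a distributive lattice generated by a finite set, $D$ is finite, and being a finite distributive lattice it is a Heyting algebra under its inherited meet and join, with its own implication $a\Rightarrow_D b=\sup\set{c\in D:a\wedge c\le b}$. Fix a valuation $w$ on $D$ agreeing with $v$ on the variables of $\phi$ (each such variable lies in $\Sigma$, so $v(p)\in D$). The crux is the \emph{filtration lemma}: $\overline{w}(\psi)=v(\psi)$ for every $\psi\in\Sigma$, proved by induction on $\psi$. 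The lattice cases are immediate since $D$ is a sublattice; the essential case $\psi=\chi_1\imp\chi_2\in\Sigma$ requires $v(\chi_1)\Rightarrow_D v(\chi_2)=v(\chi_1)\imp v(\chi_2)$. One inequality holds because $v(\chi_1)\imp v(\chi_2)=v(\psi)\in D$ is among the elements whose supremum defines $\Rightarrow_D$; the reverse holds because $D$ is a bounded sublattice, so the defining join of $a\Rightarrow_D b$ computed in $D$ coincides with the same join in $\mathcal{L}$, whence distributivity of $\mathcal{L}$ gives $v(\chi_1)\wedge(v(\chi_1)\Rightarrow_D v(\chi_2))\le v(\chi_2)$ and the adjunction characterizing $\imp$ in $\mathcal{L}$ applies. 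With the lemma in hand, $\overline{w}(\phi)=v(\phi)<1$ in the finite Heyting algebra $D$, so $\phi\notin\Th_H(D)$ and $\phi\notin\bigcap\Th_H(H)$.

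The main obstacle is precisely this implication case of the filtration lemma: a finitely generated Heyting algebra need not be finite (the free Heyting algebra on a single generator is already infinite), so one cannot simply pass to a finite Heyting \emph{subalgebra} of $\mathcal{L}$. The remedy is to descend only to the finite distributive sublattice $D$ and to \emph{recompute} the implication inside $D$, then to verify, using distributivity of $\mathcal{L}$ together with the fact that finite joins in the sublattice $D$ agree with those in $\mathcal{L}$, that this recomputed implication agrees with the ambient one on the finitely many relevant elements. Everything else---soundness, the Heyting-algebra axioms for $\mathcal{L}$, and the duality transfer to Brouwer algebras via $\Th_H(H)=\Th(\dual{H})$---is routine.
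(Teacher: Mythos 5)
Your proposal is correct, but it cannot be matched line-by-line against the paper, because the paper gives no proof of this theorem at all: Theorem~\ref{JMcT} is quoted as a classical result, attributed to Ja\'skowski \cite{Jaskowski} and McKinsey--Tarski \cite{McKinseyTarski}, and the only internal evidence offered is the later Theorem~\ref{thm:jaskowski}, which exhibits a concrete sequence $I_n$ of finite Heyting algebras with $\IPC=\bigcap_n \Th_H(I_n)$. Your route is the standard algebraic finite-model-property argument: soundness by induction on $\IPC$-derivations; completeness via the Lindenbaum--Tarski algebra $\mathcal{L}$ followed by ``filtration'' through the finite bounded distributive sublattice $D$ generated by the values of the subformulas of a non-theorem $\phi$, with the implication recomputed inside $D$; and the identity $\Th_H(H)=\Th(\dual{H})$ together with Lemma~\ref{lem:weak-dual}-style dualization to transfer between the Heyting and Brouwer formulations. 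The delicate point --- that a finitely generated Heyting subalgebra of $\mathcal{L}$ need not be finite, so one may only generate a distributive sublattice and must then check that the recomputed implication agrees with the ambient one on the relevant elements --- is exactly the crux of the McKinsey--Tarski method, and your verification of both inequalities is sound: $v(\chi_1\imp\chi_2)\in D$ is one of the candidates in the supremum defining $\Rightarrow_D$, and since finite joins in $D$ coincide with those in $\mathcal{L}$, distributivity plus the adjunction in $\mathcal{L}$ bounds that supremum by $v(\chi_1\imp\chi_2)$. One comparative remark: the citation-based route gives the paper something your abstract argument does not, namely that the refuting algebras can be taken to be Ja\'skowski's $I_n$, which are weakly projective (not dd-like); this extra property is what Corollaries~\ref{corHeyting} and~\ref{corBrouwer} actually need, so your proof establishes the present theorem but would not by itself replace the paper's use of Ja\'skowski's specific algebras downstream.
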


We wish to narrow down the family of Brouwer algebras and Heyting algebras needed for
this result, in order to suit our needs in the next section. The result we will need
later is formulated below as Corollary~\ref{corBrouwer}.

For a given lattice $L$, let $J(L)$ denote the partial order of nonzero join-irreducible
elements of~$L$. Recall the well known duality between finite posets and finite
distributive lattices. Obviously, for every finite distributive lattice $L$, $J(L)$ is a
poset, and conversely, for every finite poset $P$ we obtain a finite distributive lattice
$H(P)$ by considering the downwards closed subsets of $P$ (\cite[Theorem
II.1.9]{Graetzer}). These operations are inverses of each other, as $H(J(L))\simeq L$ (as
lattices), and $J(H(P))\simeq P$ (as posets).

The following is a useful notion from the theory of categories. An equational category is
a category whose objects form a variety of algebras, and the morphism are just the
homomorphisms.

\begin{definition}
An object of an equational category $L$ is \emph{weakly projective} if for every onto
morphism $f: L_0 \twoheadrightarrow L_1$ and every morphism $g: L \rightarrow L_1$, there
exists a morphism $h: L \rightarrow L_0$ such that $g=f \circ h$.
\end{definition}

\begin{lemma}\label{lem:weak-dual}
A distributive lattice $L$ is weakly projective
if and only if its dual $\dual{L}$ is.
\end{lemma}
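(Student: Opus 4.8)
The plan is to prove Lemma~\ref{lem:weak-dual} by exploiting the duality between the category of bounded distributive lattices and itself under the operation $L \mapsto \dual{L}$. The key structural observation is that weak projectivity is defined purely in terms of onto morphisms and the existence of lifts, so I want to show that this entire diagrammatic condition is preserved when we pass to duals. To make this precise, I would first verify that the assignment $L \mapsto \dual{L}$ is a (contravariant, but in fact here essentially involutive) functor on the relevant equational category: a lattice homomorphism $f: L_0 \to L_1$ that preserves $0$, $1$, and $\imp$ corresponds to a homomorphism $\dual{f}: \dual{L_0} \to \dual{L_1}$ between the dual structures, and taking duals twice returns the original lattice and morphism. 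Since $\dual{(\dual{L})} = L$, the map $L \mapsto \dual{L}$ is its own inverse on objects, and likewise on morphisms.

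\smallskip

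Next I would check that surjectivity is preserved under dualization: a morphism $f: L_0 \to L_1$ is onto if and only if $\dual{f}: \dual{L_0} \to \dual{L_1}$ is onto, because the underlying sets and set-theoretic maps are unchanged — only the lattice operations are swapped. With these preliminaries in place, the proof of the lemma becomes a formal translation of the lifting diagram. Suppose $\dual{L}$ is weakly projective, and suppose we are given an onto morphism $f: L_0 \twoheadrightarrow L_1$ and a morphism $g: L \to L_1$ in the original category; I want to produce a lift $h: L \to L_0$ with $g = f \circ h$. Dualizing, I obtain an onto morphism $\dual{f}: \dual{L_0} \twoheadrightarrow \dual{L_1}$ and a morphism $\dual{g}: \dual{L} \to \dual{L_1}$. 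By weak projectivity of $\dual{L}$, there is a morphism $k: \dual{L} \to \dual{L_0}$ with $\dual{g} = \dual{f} \circ k$. Dualizing once more and using that double dualization is the identity on objects and morphisms, the map $h = \dual{k}: L \to L_0$ satisfies $g = f \circ h$, as desired. The converse direction follows by the same argument with the roles of $L$ and $\dual{L}$ interchanged, again using $\dual{(\dual{L})} = L$.

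\smallskip

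The main obstacle I anticipate is not in the diagram-chasing, which is purely formal once the functoriality is set up, but in justifying that the dual of a Brouwer-type morphism is again a morphism in the appropriate category — that is, that preservation of the implication operation is correctly transported under dualization. Here one must be careful about what the morphisms are: a homomorphism of bounded distributive lattices that preserves $\imp$ need not have a dual that preserves the \emph{same} $\imp$, because the arrow operation in $\dual{L}$ is the dual (Heyting) implication rather than the Brouwer implication. So the precise statement is that weak projectivity as an object-level property of the underlying bounded distributive lattice is self-dual, and the cleanest route is to observe that weak projectivity in the category of bounded distributive lattices depends only on the lattice structure together with the fact that onto morphisms and lifts are defined symmetrically under swapping $\join$ and $\meet$. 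Thus I would phrase the argument at the level of bounded distributive lattices, where dualization is manifestly an involutive anti-automorphism of the category, and conclude that the weakly projective objects form a self-dual class.
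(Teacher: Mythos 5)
Your proof is correct, but it takes a genuinely different route from the paper. The paper invokes the characterization (Balbes--Dwinger, Theorem~1.14) that in a nontrivial equational category an object is weakly projective if and only if it is a retract of a free algebra, and then argues: if $L$ is a retract of a free distributive lattice $F$, then $\dual{L}$ is a retract of $\dual{F}$, which is still free. Your argument avoids both the cited theorem and the (not entirely trivial) fact that the dual of a free distributive lattice is free; instead you exploit directly that $L \mapsto \dual{L}$ is an involutive automorphism of the category of distributive lattices --- the same set-theoretic map is a homomorphism between the duals, surjectivity is untouched, and double dualization is the identity --- so the lifting diagram defining weak projectivity transports back and forth verbatim. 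This makes your proof more self-contained and purely formal, at the cost of being slightly longer than the paper's one-line appeal to the retract characterization; it also has the side benefit that the same functorial observation actually \emph{proves} the paper's auxiliary fact that $\dual{F}$ is free. Two small points to clean up: the functor $L \mapsto \dual{L}$ is \emph{covariant}, not contravariant (it does not reverse arrows; your own diagram chase uses it covariantly, so only the parenthetical label is off); and your worry in the last paragraph about preservation of $\imp$ resolves exactly as you say --- the lemma lives in the equational category of distributive lattices, whose morphisms are plain lattice homomorphisms, so no implication operation needs to be transported.
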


\begin{proof}
It can be shown, see e.g. \cite[Theorem~1.14]{BalbesDwinger}, that in a nontrivial
equational category, an object is weakly projective if and only if it is a retract of a
free algebra. (Recall that $A$ is a retract of $B$, if there are morphisms $f:A
\rightarrow B, g:B \rightarrow A$ such that $g\circ f=1_A$.) If $L$ is weakly projective,
and $L$ is a retract of a free distributive lattice $F$, then $\dual{L}$ is a retract of
$\dual{F}$ which is still free.
\end{proof}

When considering the category of distributive lattices, the following characterization of
the finite weakly projective objects is available:

\begin{theorem}\cite[Corollary V.10.9]{BalbesDwinger}
A finite distributive lattice $L$ is weakly projective if and only if whenever $a$ and
$b$ are join-irreducible in $L$ also $a\meet b$ is join-irreducible.
\end{theorem}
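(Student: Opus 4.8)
The plan is to prove both directions of the biconditional by exploiting the Birkhoff duality between finite distributive lattices $L$ and their posets $J(L)$ of nonzero join-irreducible elements, combined with the characterization of weakly projective objects as retracts of free algebras (recalled in the proof of Lemma~\ref{lem:weak-dual}). The key structural fact I would use is that in a finite distributive lattice, every element is the join of the join-irreducibles below it, so the meet-behaviour of join-irreducibles determines the lattice's internal structure; and that freeness of a distributive lattice can be read off from the combinatorics of its generating poset.

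First I would address the easier implication. Suppose $L$ is weakly projective, hence a retract of a free distributive lattice $F$ via maps $g \circ f = 1_L$ with $f : L \to F$ and $g : F \to L$. I want to show that join-irreducibles are closed under meet. The idea is that in a \emph{free} distributive lattice the corresponding condition holds by the explicit description of free distributive lattices (their join-irreducibles are the finite meets of generators, a family manifestly closed under meet), and that being a retract transports this closure property down to $L$. The care needed here is that a lattice retraction need not send join-irreducibles to join-irreducibles on the nose; so I would instead argue at the level of the dual posets $J(L)$ and $J(F)$, translating ``meet of join-irreducibles is join-irreducible'' into a combinatorial closure condition on $J(L)$ (closure under the meet induced from $F$) that is inherited under the poset retraction dual to $g,f$.

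For the converse, assume that $a,b$ join-irreducible implies $a \meet b$ join-irreducible. I would build an explicit retraction exhibiting $L$ as a retract of a free distributive lattice. The natural candidate is to take the free distributive lattice $F$ on the poset $J(L)$ (equivalently, $F = H(P)$ for a suitable free poset $P$ built from $J(L)$), with $f : L \to F$ the map sending each element to the join of its join-irreducibles lifted to $F$, and $g : F \to L$ the evaluation homomorphism. The hypothesis on $J(L)$ is exactly what guarantees that meets of join-irreducibles are computed correctly after lifting, so that $g \circ f = 1_L$. I would then invoke Lemma~\ref{lem:weak-dual} and the retract-characterization to conclude weak projectivity.

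The main obstacle I expect is the converse direction: constructing the retraction honestly and verifying $g \circ f = 1_L$ requires showing that the meet-closure hypothesis is precisely the obstruction to embedding $J(L)$ as a meet-closed suborder of a free meet-semilattice, so that relations among the generators are not accidentally collapsed by $g$. Rather than grind through this directly, the cleaner route is to cite the equivalence of weak projectivity with being a retract of a free algebra (already used in Lemma~\ref{lem:weak-dual}) and reduce the whole statement to the known structure theory of finite free distributive lattices via \cite{BalbesDwinger}, checking only that the meet-irreducibility condition matches their criterion; indeed the statement is attributed to \cite[Corollary V.10.9]{BalbesDwinger}, so the expected proof is a citation together with the dualisation supplied by Lemma~\ref{lem:weak-dual}.
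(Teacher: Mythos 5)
Your proposal ultimately lands on exactly what the paper does: this theorem is stated as an imported result, cited verbatim from \cite[Corollary V.10.9]{BalbesDwinger}, with no proof given in the paper at all. Your preliminary sketches of the two directions are hand-wavy in precisely the places you yourself flag (transporting join-irreducibility along a retraction, and verifying $g \circ f = 1_L$ for the constructed retraction), but since your final recommendation is the citation route, your treatment coincides with the paper's.
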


The following property from \cite{Terwijnta} gives an alternative characterization of
finite weakly projective distributive lattices:

\begin{definition}\label{ddlike}
A finite distributive lattice $L$ is {\em double diamond-like\/} (\emph{dd-like}, for
short) if in the poset $J(L)$ there are two incomparable elements with at least two
minimal upper bounds.
\end{definition}

\begin{proposition}
A finite distributive lattice $L$ is weakly projective if and only if it is not dd-like.
\end{proposition}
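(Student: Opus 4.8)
The plan is to prove the biconditional by connecting the two characterizations of weak projectivity through the join-irreducible structure. The key observation is that the stated theorem from \cite{BalbesDwinger} says that a finite distributive lattice $L$ is weakly projective if and only if the meet of any two join-irreducible elements is again join-irreducible, while being dd-like asserts the existence of two incomparable join-irreducibles with at least two minimal upper bounds. So the real content is to show that these two conditions are exact negations of each other, i.e.\ that failure of the meet-closure condition on $J(L)$ is equivalent to the dd-like configuration.

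First I would prove the contrapositive in one direction: suppose $L$ is dd-like, so there are incomparable $a, b \in J(L)$ with two distinct minimal upper bounds $u_1, u_2$. I would argue that this forces the meet of some pair of join-irreducibles to be join-reducible, contradicting weak projectivity. The natural candidates to examine are $u_1$ and $u_2$ themselves; their meet $u_1 \meet u_2$ lies above both $a$ and $b$ (since each $u_i \geq a, b$), so $u_1 \meet u_2 \geq a \join b$, yet because $u_1, u_2$ are \emph{incomparable} minimal upper bounds, $u_1 \meet u_2$ is strictly below each, and I expect it to split as a join of strictly smaller elements (intuitively $a$ and $b$ witness that $u_1 \meet u_2$ covers two independent directions), hence to be join-reducible while $u_1, u_2$ can be taken join-irreducible. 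For the converse I would assume $L$ is not dd-like and show the meet-closure condition holds: given join-irreducible $p, q$, I would analyze $p \meet q$ and use the absence of the double-diamond configuration in $J(L)$ to rule out $p \meet q$ being a join of two incomparable smaller elements.

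The cleanest way to manage both directions is to pass through the poset duality between finite distributive lattices and finite posets recalled in the excerpt, where $J(L)$ is the poset of nonzero join-irreducibles and $L \simeq H(J(L))$ is recovered as the lattice of downward-closed subsets of $J(L)$. Under this duality, join-irreducible elements of $L$ correspond exactly to principal downsets $\{x : x \leq p\}$ in $J(L)$, and the meet operation corresponds to intersection of downsets. I would therefore reformulate join-irreducibility of $p \meet q$ as a connectedness/uniqueness statement about the downset $\{x : x \leq p\} \cap \{x : x \leq q\}$, namely that it has a unique maximal element. The dd-like condition then translates into the existence of two incomparable points with more than one minimal upper bound, which is precisely the order-theoretic obstruction to such unique maxima behaving well under intersection.

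The main obstacle will be the bookkeeping in translating between the lattice-theoretic meet-irreducibility condition of \cite{BalbesDwinger} and the purely order-theoretic dd-like condition on $J(L)$; in particular I expect the delicate point to be verifying, in the forward direction, that the two minimal upper bounds $u_1, u_2$ can indeed be realized by (or reduced to) join-irreducible elements whose meet is join-reducible, and symmetrically, in the reverse direction, that the only way $p \meet q$ can fail to be join-irreducible is via a genuine double-diamond configuration rather than some degenerate coincidence. Handling these reductions carefully, while keeping track of incomparability versus strict inequalities, is where the argument will require the most attention.
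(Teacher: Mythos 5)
Your reduction is the same one the paper uses: run the proposition through the Balbes--Dwinger characterization and show that failure of meet-closure on join-irreducibles is exactly the dd-like configuration; the downset-duality detour you propose is a faithful reformulation but does not by itself discharge either direction. The issue is that both key verifications are left as expectations, and the justification you offer for the crucial one would not close the argument. In the forward direction, the properties you cite --- $u_1 \meet u_2$ lies above both $a$ and $b$ and strictly below each $u_i$ --- do \emph{not} imply that $u_1 \meet u_2$ is join-reducible. Counterexample to your heuristic: take the poset with $a,b$ incomparable, a single element $c$ above both, and incomparable $u_1,u_2$ above $c$; in the associated lattice of downsets, $u_1 \meet u_2$ is the principal downset of $c$, hence join-irreducible, even though it sits above $a \join b$ and strictly below each $u_i$. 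What saves the argument is the hypothesis you never invoke at this point: $u_1$ and $u_2$ are \emph{minimal} upper bounds in $J(L)$. If $u_1 \meet u_2$ were join-irreducible, it would be an element of $J(L)$ above $a$ and $b$ and below $u_1$, so minimality gives $u_1 \meet u_2 = u_1$, likewise $u_1 \meet u_2 = u_2$, hence $u_1 = u_2$, a contradiction. (Also, the ``delicate point'' you flag about whether $u_1,u_2$ can be realized by join-irreducible elements is vacuous: the dd-like configuration lives in $J(L)$ by definition, so they are join-irreducible by hypothesis. The paper runs this direction contrapositively: meet-closure gives every pair of join-irreducibles a greatest lower bound in $J(L)\cup\{0\}$, which rules out any double diamond.)

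In the reverse direction you say you would ``use the absence of the double-diamond configuration to rule out $p \meet q$ being a join of two incomparable smaller elements,'' but no mechanism is given, and this is where the paper's proof has its one substantive device: if $p \meet q$ is join-reducible, write $p \meet q = \bigvee X$ with $X \subseteq J(L)$ (possible since every element of a finite distributive lattice is a join of join-irreducibles), and pick two distinct maximal elements $x,y$ of $X$, which exist precisely because $p\meet q$ is join-reducible and which are automatically incomparable. Then $p$ and $q$ are upper bounds of $\{x,y\}$ in $J(L)$, and no $c \in J(L)$ above both $x$ and $y$ can lie below both $p$ and $q$: such a $c$ would satisfy $c \le \bigvee X$, hence $c \le z$ for some $z \in X$ because join-irreducibles in a finite distributive lattice are join-prime, forcing $x = z$ by maximality and then $y \le c \le x$, contradicting incomparability. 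Hence minimal upper bounds of $\{x,y\}$ chosen below $p$ and below $q$ respectively are distinct, and $L$ is dd-like. Without this (or an equivalent) argument, your second direction remains unproved; with it, your outline matches the paper's proof.
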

\begin{proof}
When $L$ is weakly projective then every pair $a,b$ of join-irreducible elements has a
greatest lower bound $a\meet b$ that is join-irreducible, and hence $a\meet b$ is also
the greatest lower bound of $a$ and $b$ in the poset $J(L)\cup\{0\}$. Hence $L$ is not
dd-like.

Conversely, if $L$ is not weakly projective then there are $a, b \in J(L)$ such that
$a\meet b$ is join-reducible. Since any element in a finite distributive lattice can be
written as a finite join of join-irreducible elements, there is a finite set $X\subseteq
J(L)$ such that $a\meet b = \bigvee X$. Since $a\meet b$ itself is join-reducible, there
are at least two maximal elements $x,y \in X$. Then both $a$ and $b$ are maximal lower
bounds of $x$ and $y$ in $J(L)$, hence $L$ is dd-like.
\end{proof}

We now undertake the task of characterizing $\IPC$ by  suitably restricted families of
Heyting algebras and Brouwer algebras. We can in fact start from a family that was
already used by Ja\'{s}kowski, by observing that it has certain additional properties.
The result we will need later is formulated below as Corollary~\ref{corHeyting}.

\begin{lemma} \label{preservation}
If $A$ and $B$ are finite distributive lattices that are not dd-like, then also $A\times
B$ is not dd-like.
\end{lemma}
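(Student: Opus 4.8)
The plan is to read the dd-like condition directly off the poset $J(A\times B)$ of nonzero join-irreducibles of the direct product, so the central step is to determine this poset explicitly. I would show that $J(A\times B)$ is (isomorphic to) the disjoint union of $J(A)$ and $J(B)$, with every element coming from $A$ incomparable to every element coming from $B$. Granting this description, the lemma reduces to a short case analysis on where a given pair of incomparable join-irreducibles sits.

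First I would verify the description of $J(A\times B)$. Writing $0$ for the least elements of $A$ and of $B$, an element $(a,b)$ with $a\neq 0$ and $b\neq 0$ is join-reducible, since $(a,b)=(a,0)\join(0,b)$ with both joinands strictly below $(a,b)$; hence every join-irreducible of the product has a zero coordinate. Conversely $(a,0)$ is join-irreducible in $A\times B$ exactly when $a$ is join-irreducible in $A$, because joins in the product are computed coordinatewise and the second coordinate remains $0$; symmetrically for $(0,b)$. Thus $J(A\times B)=\set{(a,0):a\in J(A)}\cup\set{(0,b):b\in J(B)}$. Within each family the product order restricts to the order of $J(A)$, respectively $J(B)$, whereas $(a,0)\le(0,b)$ would force $a\le 0$ and $(0,b)\le(a,0)$ would force $b\le 0$, both impossible. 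So the two families are mutually incomparable and $J(A\times B)$ is the disjoint-union poset $J(A)\sqcup J(B)$.

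It then remains to check that no two incomparable elements $p,q$ of $J(A\times B)$ admit two or more minimal upper bounds. If $p$ and $q$ lie in different copies, say $p=(a,0)$ and $q=(0,b)$, then any common upper bound in $J(A\times B)$ would be a join-irreducible above both; but an element of the $J(A)$-copy cannot dominate $(0,b)$ and an element of the $J(B)$-copy cannot dominate $(a,0)$, so $p$ and $q$ have no common upper bound at all, and a fortiori fewer than two minimal ones. If instead $p$ and $q$ lie in the same copy, say the $J(A)$-copy, then the mutual incomparability of the copies forces all their upper bounds into that copy, so their minimal upper bounds in $J(A\times B)$ coincide with their minimal upper bounds in $J(A)$; since $A$ is not dd-like and $p,q$ are incomparable there, there is at most one, and the symmetric argument settles the $J(B)$-copy. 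Hence every incomparable pair has at most one minimal upper bound and $A\times B$ is not dd-like. The one point needing care is the cross-copy case: one must confirm that no join-irreducible of the product can act as a common upper bound, which is precisely where the coordinatewise incomparability of the two copies is used; the same-copy cases are then immediate reductions to the hypotheses on $A$ and $B$.
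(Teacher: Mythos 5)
Your proof is correct, but it takes a genuinely different route from the paper's, and the difference matters. The paper's proof rests on the claim that $(a,b)\in A\times B$ is join-irreducible if and only if $a\in J(A)$ and $b\in J(B)$; it then argues by contradiction inside a double-diamond configuration in $J(A\times B)$, projecting to first coordinates, using that $A$ is not dd-like to find $a$ with $a_0,a_1\le a<a_2$ in $J(A)$, and concluding that $(a,b_2)$ contradicts the minimality of $(a_2,b_2)$. Your proof instead starts from the standard fact of the duality between finite posets and finite distributive lattices, which you verify directly: every join-irreducible of a direct product has a zero coordinate, so $J(A\times B)$ is the disjoint union $J(A)\sqcup J(B)$ with the two copies mutually incomparable. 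These two descriptions are incompatible, and yours is the correct one: in $I_1\times I_1$ (the four-element Boolean algebra) the top element $(1,1)$ is the join of $(1,0)$ and $(0,1)$, hence join-reducible, whereas the paper's criterion would declare it join-irreducible. Granting your description, the case analysis is airtight: a cross-copy incomparable pair has no common upper bound in $J(A\times B)$ at all, and a same-copy incomparable pair has exactly the minimal upper bounds it has in $J(A)$ (respectively $J(B)$), hence at most one by hypothesis. Beyond avoiding a false characterization, your route yields the sharper fact that $A\times B$ is dd-like if and only if $A$ or $B$ is; in particular the paper's opening remark that only one of $A$ and $B$ needs to be non-dd-like is also incorrect, since a dd-like configuration in $J(B)$ persists untouched in $J(A)\sqcup J(B)$. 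Both hypotheses are needed, exactly as you use them, and the lemma as stated (and as applied to the powers $I_n^n$ in Corollary~\ref{corHeyting}) remains true.
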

\begin{proof}
We need in fact that only one of $A$ and $B$ is not dd-like. Suppose that $A$ is not
dd-like. Note that $(a,b) \in A\times B$ is join-irreducible if and only if $a\in J(A)$
and $b\in J(B)$. Suppose that $A\times B$ is not dd-like, say $J(A\times B)$ contains the
following configuration:
\begin{center}
\setlength{\unitlength}{1.5mm}
\begin{picture}(10,13)

\put(0,0){\circle*{1.2}} \put(0,10){\circle*{1.2}} \put(10,0){\circle*{1.2}}
\put(10,10){\circle*{1.2}}

\put(0,0){\line(0,1){10}} \put(10,0){\line(0,1){10}} \put(0,0){\line(1,1){10}}
\put(0,10){\line(1,-1){10}}

\put(-8.8,-1){$(a_0,b_0)$} \put(-8.8,10){$(a_2,b_2)$} \put(11,-1){$(a_1,b_1)$}
\put(11,10){$(a_3,b_3)$}

\end{picture}

\end{center}

\smallskip\noindent
Here the pairs $(a_2,b_2)$ and $(a_3,b_3)$ are minimal upper bounds for $(a_0,b_0)$ and
$(a_1,b_1)$ in $J(A\times B)$. Then in $J(A)$ the elements $a_2$ and $a_3$ are upper
bounds for $a_0$ and $a_1$. Since by assumption $A$ is not dd-like, not both of $a_2$ and
$a_3$ are minimal upper bounds. Say $a_2$ is not minimal, and that $a_0, a_1 \leq a <
a_2$ in $J(A)$. Replacing $(a_2,b_2)$ by $(a,b_2)$, we see that $(a_2, b_2)$ was not a
minimal upper bound of $(a_0,b_0)$ and $(a_1,b_1)$, contrary to assumption.
\end{proof}

\noindent We use the following result of Ja\'skowski~\cite{Jaskowski}, (cited in
Szatkowski~\cite[p41]{Szatkowski}). Given two Heyting algebras $A$ and $B$, let $A + B$
be the algebra obtained by stacking $B$ on top of $A$, identifying $0_B$ with $1_A$.
(This notion of sum is from Troelstra~\cite{Troelstra}.) Given $A$ and $B$, the Cartesian
product $A\times B$ is again a Heyting algebra. Let $A^n$ denote the $n$-fold product of
$A$.

Inductively define the following sequence of Heyting algebras. Let $I_1$ be the
two-element Boolean algebra, and let
$$
I_{n+1} =  I^n_n + I_1.
$$

The following theorem characterizes $\IPC$ in terms of Heyting algebras:

\begin{theorem}\label{thm:jaskowski} {\rm (Ja\'skowski \cite{Jaskowski})}
$\IPC = \bigcap_n \Th_H(I_n)$.
\end{theorem}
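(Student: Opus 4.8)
The plan is to prove the two inclusions separately, with essentially all the work in the completeness direction $\bigcap_n \Th_H(I_n)\subseteq\IPC$. The inclusion $\IPC\subseteq\bigcap_n\Th_H(I_n)$ is immediate from soundness: each $I_n$ is by construction a finite Heyting algebra, so Theorem~\ref{JMcT} gives $\IPC=\bigcap\{\Th_H(H):H\text{ a finite Heyting algebra}\}\subseteq\Th_H(I_n)$ for every $n$.

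For the converse I would argue contrapositively. Suppose $\phi\notin\IPC$. By Theorem~\ref{JMcT} there is a finite Heyting algebra $H$ with $\phi\notin\Th_H(H)$. I would pass to Kripke frames via the duality between finite distributive lattices and finite posets recalled above: write $H$ as the algebra of down-sets of a finite poset $P$, so that $\Th_H(H)$ is the set of formulas intuitionistically valid on the frame $P$. Since forcing at a point depends only on the subframe it generates, $\phi$ is already refuted on some rooted generated subframe $R$ of $P$. Writing $H_R$ for the finite Heyting algebra dual to $R$, we then have $\phi\notin\Th_H(H_R)$, and the whole theorem reduces to the \emph{universality statement}: for every finite rooted poset $R$ there is an $n$ admitting a surjective p-morphism $P_{I_n}\twoheadrightarrow R$, equivalently a Heyting embedding $H_R\hookrightarrow I_n$. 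Indeed, given such an embedding, dualizing (Definition~\ref{dual}) yields an injective Brouwer homomorphism $\dual{H_R}\hookrightarrow\dual{I_n}$, whence Lemma~\ref{surj}(1) gives $\Th(\dual{I_n})\subseteq\Th(\dual{H_R})$, that is $\Th_H(I_n)\subseteq\Th_H(H_R)$, and so $\phi\notin\Th_H(I_n)$, as required.

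To prove the universality statement I would first read off the dual frames of the $I_n$. Under the duality, products of Heyting algebras correspond to disjoint unions of the dual frames and the stacking sum $A+B$ to an ordinal sum, so that adjoining a copy of $I_1$ adjoins a single new extreme point; a straightforward induction then identifies $P_{I_n}$ as a finite rooted tree of depth $n$ whose root carries $n-1$ subtrees each isomorphic to $P_{I_{n-1}}$, the branching thus decreasing from $n-1$ at the root. Because $R$ is rooted, it is itself a p-morphic image of the finite tree $T_R$ obtained by unravelling it (the map sending an ascending path to its endpoint), so it suffices to cover $T_R$ by some $P_{I_n}$ and compose the two p-morphisms.

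This covering lemma is the step I expect to be the main obstacle: for $n$ large relative to $R$ (it will suffice to take $n$ past the sum of the depth and the maximal branching of $T_R$, e.g.\ $n\ge |R|$) there is a surjective p-morphism $P_{I_n}\twoheadrightarrow T_R$. I would construct it by recursion down the tree: send the root of $P_{I_n}$ to the root of $T_R$ and, at each node, surject its children onto the children of the image node, spending the surplus branching of $P_{I_n}$ to stall at the current value where extra depth is needed, and collapsing the remainder of a branch onto a leaf of $T_R$ once that leaf is reached. The delicate points are the verification of the p-morphism back condition at every node and the bookkeeping that the decreasing branching $n-1,n-2,\dots$ of $P_{I_n}$ still dominates the bounded branching of $T_R$ at each level, which is exactly what dictates how large $n$ must be chosen in terms of $|R|$. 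With the universality statement established, the two inclusions combine to give $\IPC=\bigcap_n\Th_H(I_n)$.
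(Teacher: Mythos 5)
The paper does not prove this theorem at all: it is imported as a classical result of Ja\'skowski \cite{Jaskowski}, cited via Szatkowski \cite{Szatkowski}, and used as a black box, so there is no internal argument to compare yours against. Judged on its own merits, your proposal is the standard modern proof, and its outline is correct. Soundness is indeed an instance of Theorem~\ref{JMcT}; using Theorem~\ref{JMcT} again for the completeness direction is legitimate (that theorem is independent of the present one, so there is no circularity); the passage to a rooted generated subframe, the unravelling, and the reduction to the universality statement via a Heyting embedding $H_R\hookrightarrow I_n$ and Lemma~\ref{surj}(1) applied to the dual Brouwer algebras are all sound. Your identification of the dual frames is also right: the new point contributed by $+\,I_1$ becomes the root, so $P_{I_{n+1}}$ is a root carrying $n$ copies of $P_{I_n}$, hence $P_{I_n}$ is a tree of depth $n$ with branching $n-k$ at level $k$. (Minor convention point: the down-sets of $P$ are the algebra of the frame $\dual{P}$, but since you consistently put the $I_1$-point at the bottom, your frames come out correctly oriented.)

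The one step that needs repair is the covering lemma, precisely at the device you call stalling. If a surplus child of a node at level $k$ is sent to the same \emph{non-leaf} image $t$, the back condition forces the entire subtree above that child to cover the subtree $T_t$ of $T_R$ again, starting one level lower and with branching smaller by one; if such stalls are allowed to accumulate along a branch, the inequality ``branching of $P_{I_n}$ at the current level $\geq$ number of children of the current image'' eventually fails, no matter how large $n$ is (consider a root with $b\geq 2$ leaf children and suppose every surplus child stalls). The fix is that stalling at non-leaves is never needed: since every branch of $P_{I_n}$ has full length $n \geq \mathrm{depth}(T_R)$, excess depth is absorbed solely by collapsing onto leaves, and a surplus child at a non-leaf image $t$ should instead \emph{duplicate} the coverage of one of the genuine children, i.e.\ be mapped onto $T_{t_i}$ for some child $t_i$ of $t$. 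With that change the recursion advances in lockstep (a node at level $k$ has an image at depth $k$ until a leaf is reached), and the required inequality $n-k \geq \#\{\text{children of the image}\}$ does follow from $n\geq |R|$, because the $k$ elements of an ascending path in $R$ together with the immediate successors of its endpoint are pairwise distinct elements of $R$. So your bound is correct, but only for the no-stalling version of the construction; as literally described, the verification you flag as delicate would not go through.
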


\begin{corollary} \label{corHeyting}
There is a collection $\{H_n\}_{n\in\omega}$ of finite Heyting algebras such that
$$
\IPC = \bigcap_n \Th_H(H_n),
$$
and such that for every $n$, $H_n$ is weakly projective.
\end{corollary}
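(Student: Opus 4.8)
The plan is to take the Jaśkowski algebras $I_n$ from Theorem~\ref{thm:jaskowski} as the starting point, and to show that each $I_n$ is weakly projective, so that one may simply set $H_n = I_n$. By Lemma~\ref{lem:weak-dual} and the proposition characterizing weak projectivity, a finite distributive lattice is weakly projective if and only if it is not dd-like. Hence the entire task reduces to proving, by induction on $n$, that none of the $I_n$ is dd-like. The base case $I_1$ is the two-element Boolean algebra, which is trivially not dd-like since $J(I_1)$ has a single element.

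For the inductive step I would unravel the two operations used to build $I_{n+1} = I_n^n + I_1$. The product step is handled directly by Lemma~\ref{preservation}: since $I_n$ is not dd-like by induction hypothesis, the $n$-fold product $I_n^n$ is not dd-like either (iterating the lemma, using that only one factor need be non-dd-like). The remaining step is the stacking operation $A + I_1$, which identifies $1_A$ with $0_{I_1}$ and places a two-element algebra on top. The key observation is how this affects the poset of nonzero join-irreducibles: stacking $I_1$ on top of $A$ adds exactly one new join-irreducible element sitting strictly above every element of $J(A)$ (namely the new top region contributed by $I_1$), while leaving the order relations among the old join-irreducibles unchanged. So $J(A + I_1)$ is $J(A)$ with a single new maximum adjoined.

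The main step is then to verify that adjoining a single global maximum to $J(A)$ cannot create a dd-like configuration when $J(A)$ had none. Any pair of incomparable elements $x,y$ in $J(A+I_1)$ must both lie in $J(A)$, since the new top element is comparable (above) everything. Their minimal upper bounds in $J(A+I_1)$ either coincide with their minimal upper bounds in $J(A)$, or, if $x$ and $y$ had no upper bound in $J(A)$ at all, the unique new minimal upper bound is the adjoined maximum. In the first case, non-dd-likeness is inherited from $J(A)$; in the second case there is only one minimal upper bound, so no double-diamond arises. Thus $A + I_1$ is not dd-like whenever $A$ is, completing the induction and giving that every $I_n$ is weakly projective.

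I expect the stacking analysis to be the delicate point: one must argue carefully that the sum operation $+$ contributes exactly one new join-irreducible and that it is a global upper bound, rather than several incomparable new ones, since a sloppy description of the join-irreducibles of $A + I_1$ could falsely introduce extra minimal upper bounds and hence a spurious dd-like pattern. Once the structure of $J(A + I_1)$ is pinned down precisely, the case analysis on pairs $x,y$ is routine. With weak projectivity of every $I_n$ established, the corollary follows immediately from Theorem~\ref{thm:jaskowski} by setting $H_n = I_n$.
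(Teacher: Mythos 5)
Your proposal is correct and takes essentially the same approach as the paper: induction on $n$, with the base case $I_1$, the product step handled by Lemma~\ref{preservation}, and the conclusion $H_n = I_n$. The only difference is that you spell out the stacking step (identifying $J(A + I_1)$ as $J(A)$ with one new global maximum adjoined, which cannot create a second minimal upper bound), a step the paper's proof dismisses as immediate; your elaboration is accurate.
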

\begin{proof}
Note that the lattices $I_n$ defined above are all distributive lattices, and because
they are finite they are automatically Heyting algebras. We claim that every $I_n$ is not
dd-like. This is clearly true for $n=1$. Suppose that $I_n$ is not dd-like. Then by
Lemma~\ref{preservation} also $I^n_n$ is not dd-like. It follows immediately that
$I_{n+1} = I^n_n + I_1$ is also not dd-like. Hence all $I_n$ are finite Heyting algebras
that are not dd-like, and hence we can simply take $H_n = I_n$.
\end{proof}

\begin{corollary} \label{corBrouwer1}
There is a collection $\{B_n\}_{n\in\omega}$ of finite Brouwer algebras such that
$$
\IPC = \bigcap_n \Th(B_n),
$$
and such that for every $n$, $B_n$ is weakly projective.
\end{corollary}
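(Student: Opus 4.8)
The plan is to obtain the Brouwer algebras $B_n$ simply by dualizing the Heyting algebras $H_n$ already produced in Corollary~\ref{corHeyting}. Concretely, I would set $B_n = \dual{H_n}$ for each $n$. Since each $H_n$ is a finite Heyting algebra, its dual $\dual{H_n}$ is by Definition~\ref{dual} a Brouwer algebra, and it is finite because dualization does not change the underlying set. So each $B_n$ is a finite Brouwer algebra, as required.

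Next I would verify the theory condition. By the last remark in Definition~\ref{dual} we have $\Th_H(H_n) = \Th(\dual{H_n}) = \Th(B_n)$ for every $n$. Intersecting over all $n$ and invoking Corollary~\ref{corHeyting} then gives
$$
\bigcap_n \Th(B_n) = \bigcap_n \Th_H(H_n) = \IPC,
$$
which is exactly the displayed equality we want.

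Finally, for the weak projectivity of each $B_n$: Corollary~\ref{corHeyting} guarantees that every $H_n$ is weakly projective, and Lemma~\ref{lem:weak-dual} asserts that a distributive lattice is weakly projective if and only if its dual is. Applying this to $L = H_n$ yields that $B_n = \dual{H_n}$ is weakly projective for every $n$, completing the proof.

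There is essentially no hard step here; the statement is a formal dual of Corollary~\ref{corHeyting}, and everything reduces to bookkeeping with the identity $\Th_H(H) = \Th(\dual{H})$ together with the duality invariance of weak projectivity from Lemma~\ref{lem:weak-dual}. If any care is needed, it is only in making sure the signature conventions match, i.e.\ that passing from the Heyting-algebra semantics ($\dual{p}_\phi$ evaluating to $1$) to the Brouwer-algebra semantics ($p_\phi$ evaluating to $0$) is exactly the content of $\Th_H(H) = \Th(\dual{H})$, so that no recomputation of the interpreting polynomials is required.
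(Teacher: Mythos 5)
Your proposal is correct and follows essentially the same route as the paper: both take $B_n = \dual{H_n}$ (equivalently $\dual{I_n}$), both rely on the identity $\Th_H(L) = \Th(\dual{L})$ from Definition~\ref{dual} (the paper merely unfolds it pointwise for a formula $\phi \notin \IPC$), and both obtain weak projectivity of $B_n$ from Lemma~\ref{lem:weak-dual}.
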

\begin{proof}
Consider any propositional formula $\phi \notin \IPC$. Then by Theorem
\ref{thm:jaskowski} there exists a weakly projective finite lattice $H_n$ and an
evaluation of $\dual{p}_\phi$ for which $\dual{p}_\phi\ne 1$, and thus, for this
evaluation in $\dual{H}_n$, $p_\phi\ne 0$,
showing that $\phi \notin \Th(\dual{H}_n)$. It remains to show that $B_n=\dual{H}_n$ is
weakly projective: this follows from Lemma \ref{lem:weak-dual}.
\end{proof}

Summarizing, we have:

\begin{corollary} \label{corBrouwer}
We have
\begin{align*}
\IPC &= \bigcap\bigset{\Th(B) : B \text{  finite and weakly projective}}.\\
   &= \bigcap\bigset{\Th_H(H) : H \text{  finite and weakly projective}}.
\end{align*}
\end{corollary}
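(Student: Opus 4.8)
The plan is to obtain both equalities by a single monotonicity (sandwiching) argument, reusing the two facts already in hand. Theorem~\ref{JMcT} gives $\IPC$ as the intersection of the theories $\Th$ (respectively $\Th_H$) ranging over \emph{all} finite Brouwer (respectively Heyting) algebras, while Corollaries~\ref{corBrouwer1} and~\ref{corHeyting} produce \emph{particular} countable families $\{B_n\}$ and $\{H_n\}$ of weakly projective finite algebras whose theories already intersect down to $\IPC$. The class of all finite weakly projective Brouwer algebras lies between these two extremes, and since the assignment sending a class of algebras to the intersection of their theories is inclusion-reversing, the corresponding intersection of theories is trapped between two copies of $\IPC$.

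Concretely, for the Brouwer clause I would write $\mathcal{W}$ for the class of finite weakly projective Brouwer algebras and record the inclusions $\{B_n : n \in \omega\} \subseteq \mathcal{W} \subseteq \{\,\text{finite Brouwer algebras}\,\}$, the first because every $B_n$ is weakly projective (Corollary~\ref{corBrouwer1}) and the second because weak projectivity is an extra condition. Taking intersections of $\Th(\cdot)$ reverses these inclusions, so that
\begin{equation*}
\bigcap\bigset{\Th(B) : B \text{ a finite Brouwer algebra}} \subseteq \bigcap_{B \in \mathcal{W}} \Th(B) \subseteq \bigcap_n \Th(B_n).
\end{equation*}
By Theorem~\ref{JMcT} the leftmost set equals $\IPC$, and by Corollary~\ref{corBrouwer1} the rightmost set also equals $\IPC$; hence the middle set equals $\IPC$, which is the first asserted identity. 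The Heyting clause I would prove verbatim, replacing $\Th$ by $\Th_H$, the family $\{B_n\}$ by $\{H_n\}$, and appealing to the $\Th_H$ half of Theorem~\ref{JMcT} together with Corollary~\ref{corHeyting}.

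I do not expect any genuine obstacle: this corollary merely repackages content that is already in place, and the only point demanding care is the direction of the monotonicity, namely that enlarging the class of algebras can only shrink the intersection of their theories. All the substantive work—that one may restrict attention to weakly projective algebras, and that Ja\'skowski's stacking construction yields such algebras—was already discharged in Lemma~\ref{preservation} and in Corollaries~\ref{corHeyting} and~\ref{corBrouwer1}.
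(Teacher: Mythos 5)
Your proposal is correct and matches the paper's (implicit) reasoning: the paper states this corollary as a direct summary of Corollaries~\ref{corBrouwer1} and~\ref{corHeyting}, with exactly the sandwich you describe --- the class of finite weakly projective algebras sits between the Ja\'skowski-type families and the class of all finite algebras from Theorem~\ref{JMcT}, and intersecting theories reverses these inclusions. Your only addition is to spell out this monotonicity argument explicitly, which the paper leaves to the reader.
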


\section{A factor of the Muchnik lattice that captures $\IPC$}
\label{factorMw}

In this section we prove that there is a factor of $\M_w$, obtained by dividing $\M_w$
with a principal filter, that has $\IPC$ as its theory. Hence we see that the analogue of
Skvortsova's result (Theorem~\ref{Skvortsova}) holds for $\M_w$. We will be very liberal
with notation, frequently confusing Muchnik degrees with their representatives.

The property of dd-like lattices (Definition~\ref{ddlike}) was used to characterize the
lattices that are isomorphic to an interval of $\M_w$:

\begin{theorem} {\rm (Terwijn \cite{Terwijnta})} \label{characterization}
For any finite distributive lattice $L$ the following are equivalent:
\begin{enumerate}[\rm (i)]

\item $L$ is isomorphic to an interval in $\M_w$,

\item $L$ is not double diamond-like,

\item $L$ does not have a double diamond-like lattice as a subinterval.

\end{enumerate}
\end{theorem}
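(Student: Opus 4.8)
The plan is to prove the three-way equivalence in Theorem~\ref{characterization} by establishing a cycle of implications, exploiting the duality between finite distributive lattices and finite posets recalled earlier. The equivalence (ii)$\Leftrightarrow$(iii) is essentially bookkeeping: a subinterval of a finite distributive lattice $L$ is again a finite distributive lattice, and the dd-like property (Definition~\ref{ddlike}) is a statement about the poset $J(L)$ of join-irreducibles. First I would observe that $L$ is dd-like precisely when $J(L)$ contains a ``double diamond'' configuration, i.e. two incomparable elements possessing at least two minimal upper bounds, and that such a configuration is literally a double-diamond-like subinterval. Thus (iii)$\Rightarrow$(ii) is immediate (take the subinterval to be $L$ itself), and for (ii)$\Rightarrow$(iii) I would argue that if $L$ contains a dd-like subinterval then the offending incomparable pair and their two minimal upper bounds, being join-irreducible in the subinterval, witness dd-likeness of $L$ itself after transporting the configuration back up to $J(L)$.

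The substantive content is the equivalence (i)$\Leftrightarrow$(ii). For the direction (i)$\Rightarrow$(ii), the strategy is contrapositive: assuming $L$ is dd-like, I would show no interval of $\M_w$ can be isomorphic to $L$. The heuristic is that the double-diamond configuration forces, in any realization inside $\M_w$, two incomparable Muchnik degrees whose two minimal upper bounds cannot both exist, because the join in $\M_w$ is computed (by Lemma~\ref{lem:useful}) as an intersection of the underlying Muchnik mass problems and hence is uniquely determined rather than admitting two distinct minimal candidates. I would make this precise by analyzing how join-irreducibility in an interval $\M_w[\A,\B]$ reflects the structure of the mass problems, using that $\A \join \B \equiv_w \A \cap \B$ for Muchnik mass problems.

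For the harder direction (ii)$\Rightarrow$(i), the plan is an explicit construction: given a finite distributive lattice $L$ that is not dd-like, build an interval in $\M_w$ isomorphic to $L$. By the duality $L \simeq H(J(L))$, it suffices to realize the poset $P = J(L)$ by assigning to each element of $P$ a Muchnik degree (or a mass problem) so that the lattice of downward-closed subsets is reproduced as an interval, with meets and joins realized by unions and intersections of mass problems as in Lemma~\ref{lem:useful}. The not-dd-like hypothesis is exactly what guarantees this assignment is coherent: whenever two incomparable elements of $P$ have a unique minimal upper bound, the corresponding join of mass problems lands on the intended degree without collision. Concretely I would attach to each point of $P$ a mass problem built from independent degrees of solvability $\bfS'$ (the minimal-cover construction recalled at the end of Section~1), choosing mutually Turing-independent reals so that the reducibility relations among the resulting mass problems mirror the order on $P$ exactly.

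The hard part will be this last construction and the verification that the map from downward-closed subsets of $P$ into $\M_w$ is an order-isomorphism onto an interval --- in particular, checking that the absence of double diamonds in $J(L)$ prevents spurious identifications or spurious reducibilities among the constructed Muchnik degrees. The delicate point is ensuring that incomparable elements really do map to Muchnik-incomparable degrees and that minimal upper bounds are realized uniquely; this is where the combinatorial hypothesis (ii) must be fed carefully into the computability-theoretic coding, and where the bulk of the technical effort resides.
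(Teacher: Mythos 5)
Your proposal has to be judged against the proof in \cite{Terwijnta}, since the paper itself does not prove Theorem~\ref{characterization} but quotes it; what the paper does reproduce (the Lachlan--Lebeuf embedding and the sets in \eqref{Yn} and \eqref{Zn}) are exactly the ingredients of the hard direction, and they expose two genuine gaps in your plan. The fatal gap is in (i)$\Rightarrow$(ii). Your stated reason --- that the two minimal upper bounds ``cannot both exist, because the join in $\M_w$ is computed as an intersection \ldots{} and hence is uniquely determined rather than admitting two distinct minimal candidates'' --- is a non sequitur. Joins are unique in \emph{every} lattice; the double diamond is itself a perfectly good finite distributive (hence completely distributive) lattice in which $a\join b$ exists and is unique. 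The point of Definition~\ref{ddlike} is that $c$ and $d$ are two minimal upper bounds of $a,b$ in the \emph{poset} $J(L)$ of join-irreducibles, both lying strictly above the lattice join $a\join b$, which is join-reducible and hence not an element of $J(L)$ at all. No appeal to uniqueness of suprema, or to joins being intersections, can rule out this configuration; one can even check that in the double diamond every element is an infimum of meet-prime elements, so the obstruction is not ``complete distributivity plus generation by primes'' either. The genuine argument must exploit finer computability-theoretic features of $\M_w$ inside the given interval $[\X,\Y]$ --- for instance that singleton degrees are meet-prime ($\{g\}\geq_w \P\cup\mathcal{Q}$ forces $\{g\}\geq_w\P$ or $\{g\}\geq_w\mathcal{Q}$), inf-generate every degree, and are closed under join --- and your sketch gives no workable mechanism of this kind.

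The direction (ii)$\Rightarrow$(i) is also under-equipped. Mutually Turing-independent reals realize an antichain, so they cannot ``mirror the order on $P$''; and even a correct order-embedding of $P=J(L)$ into the Turing degrees is far too weak, because the interval $[\X,\Y]$ of $\M_w$ must contain \emph{exactly} the images of the downward closed subsets of $P$ and nothing else, whereas a priori an interval of $\M_w$ is enormous. The actual role of hypothesis (ii) is not the ``coherence'' you describe: it is that $J(L)$ of a non-dd-like lattice is an initial segment of an upper semilattice, which is precisely what permits invoking Lachlan--Lebeuf \cite{Lachlan-Lebeuf} to embed $J(L)$ as an \emph{interval} of the Turing degrees; on top of that one needs the sets $\Z^f=\bigset{g \in \{f\}' : g|_T h \text{ for all covers } h \text{ of } f \text{ in } J(L)}$ and the bounds \eqref{Yn} to cut the Muchnik interval down to a copy of $L$. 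None of this appears in your sketch, although it is recalled verbatim in the paper. (Your treatment of (ii)$\Leftrightarrow$(iii) is right in spirit, but the ``transport'' should go through the standard isomorphism $j\mapsto j\join a$ between $\set{j\in J(L): j\le b,\ j\not\le a}$ and $J(L[a,b])$, together with a check that minimal upper bounds remain minimal; this is routine for finite distributive lattices but not automatic, since join-irreducibles of $L[a,b]$ are generally not join-irreducibles of $L$.)
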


\noindent Let $\{B_n\}_{n \in \omega}$ be the family of Brouwer algebras from
Corollary~\ref{corBrouwer}. Since $B_n$ is not dd-like, by Theorem~\ref{characterization}
there are sets $\X_n$ and $\Y_n$ such that the interval $[\X_n, \Y_n]$ in $\M_w$ is
isomorphic to $B_n$ for every~$n$. This is an isomorphism of finite distributive
lattices, hence it is automatically an isomorphism of Brouwer algebras.

It is useful to remind the reader of some of the details of the construction in
\cite{Terwijnta}. Let $J_n=J(B_n)$ be the set of the nonzero join-irreducible elements of
$B_n$; since $B_n$ is not dd-like, $J_n$ is an initial segment of an upper semilattice.
Embed $J_n$ as an interval of the Turing degrees (this can be done, by a classical result
of Lachlan and Lebeuf \cite{Lachlan-Lebeuf}, stating that for every Turing degree
$\mathbf{a}$, every countable upper semilattice with least element $0$ is isomorphic to
an interval of the Turing degrees with bottom $\mathbf{a}$). For every Turing degree in
the range of this embedding, choose a representative, as a function $f \in
\omega^\omega$, and for convenience, let us identify $J_n$ with the set of these chosen
representatives. For every $A \subseteq J_n$, let $\hat A$ denote the elements of $A$
that are $\le_T$-maximal, i.e. maximal with respect to Turing reducibility.

Inspection of the proof of Theorem~3.11 in \cite{Terwijnta} shows that there is a set
$\Z_n$ such that {\setlength\arraycolsep{2pt}
\begin{eqnarray}
\X_n &=& \Z_n \meet J_n  \nonumber \\
\Y_n &=& \Z_n \meet \Meet \bigset{ \{f\}' : f \in \hat J_n} \label{Yn}
\end{eqnarray}
and $B_n$ is isomorphic to the interval $[\X_n, \Y_n]$ of the Muchnik lattice.
Furthermore, we have that $\Z_n=\bigcup_{f \in J_n}\Z^f_n$, where
\begin{equation} \label{Zn}
\Z^f_n=\bigset{g \in \{f\}': g|_T h \text{ for all covers $h$ of $f$ in $J_n$}}.
\end{equation}
The sets $J_n$ come from embedding results into the Turing degrees, and we have rather
great freedom in picking them. In particular, we may pick them such that they satisfy for
every $n\neq m$,
\begin{equation} \label{eigenschap}
(\forall f\in J_n) \;[\{f\}\not\geq_w \Z_m]
\end{equation}
and
\begin{equation} \label{incomp}
\left.
\begin{array}{r}
f \in \hat J_m \\
g \in \hat J_n \\
h \in J_n
\end{array} \right\} \Longrightarrow
f \oplus h >_T g.
\end{equation}
To obtain this, it is enough to embed as an interval of the Turing degrees, the
upper semilattice $J$ defined as follows: First, let
$$
U=\bigcup_n \{n\}\times J_n
$$
(where, again, $J_n=J(B_n)$) and in $U$ define $(n,x) \le (m, y)$ if and only if $n=m$
and, in $J_n$, $x \le y$; finally define $J$ by adding a least element and a greatest
element to $U$. Clearly $J$ is a countable upper semilattice with least element, and thus
can be embedded as an interval of the Turing degrees: under this embedding each $J_n$ is
embedded as an interval of the Turing degrees, with the desired properties.

Define {\setlength\arraycolsep{2pt}
\begin{eqnarray*}
\Z &=& \bigcup_{n\in\omega} \Z_n,\\
\hat{\X}_n &=& \Z \meet J_n \equiv_w \X_n \meet \Z, \\
\hat{\Y}_n &=& \Z \meet \Meet \bigset{ \{f\}' : f \in \hat J_n}
\equiv_w \Y_n \meet \Z.
\end{eqnarray*}
}%

\begin{lemma}\label{lem:interval} The interval $[\X_n,\Y_n]$ is isomorphic to the interval
$[\hat{\X}_n,\hat{\Y}_n]$.
\end{lemma}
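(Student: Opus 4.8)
The plan is to exhibit an explicit lattice isomorphism between $[\X_n,\Y_n]$ and $[\hat\X_n,\hat\Y_n]$ and invoke the surjective homomorphism machinery already developed. Recall from the construction that $\X_n = \Z_n \meet J_n$ and $\Y_n = \Z_n \meet \Meet\{\{f\}': f \in \hat J_n\}$, while the hatted versions replace $\Z_n$ by the larger union $\Z = \bigcup_{m} \Z_m$; since $\meet$ is union of mass problems (Lemma~\ref{lem:useful}), adjoining the extra pieces $\Z_m$ for $m \neq n$ is exactly meeting with $\Z$. Concretely, $\hat\X_n \equiv_w \X_n \meet \Z$ and $\hat\Y_n \equiv_w \Y_n \meet \Z$, as recorded just before the statement. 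So the natural candidate map is $c \mapsto c \meet \Z$, and Lemma~\ref{surjectivity} already guarantees that $c \mapsto c \meet \Z$ is a surjective lattice homomorphism from $[\X_n,\Y_n]$ onto $[\X_n \meet \Z, \Y_n \meet \Z] = [\hat\X_n,\hat\Y_n]$.

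The remaining work, and the crux of the argument, is to show that this surjection is also \emph{injective}, so that it is an isomorphism. Injectivity is where the carefully engineered properties \eqref{eigenschap} and \eqref{incomp} of the embedding of $J = \bigcup_n \{n\}\times J_n$ into the Turing degrees must be used. The point is that meeting with the ``foreign'' components $\Z_m$ ($m\neq n$) should not collapse any two distinct degrees in $[\X_n,\Y_n]$. First I would take $c, d \in [\X_n,\Y_n]$ with $c \meet \Z \equiv_w d \meet \Z$ and aim to derive $c \equiv_w d$. Writing $c \meet \Z = c \cup \Z$ and similarly for $d$, the hypothesis says $c \cup \Z \equiv_w d \cup \Z$; I must peel off the contribution of $\Z \setminus \Z_n = \bigcup_{m\neq n}\Z_m$ and show it is irrelevant for elements $c$ sitting between $\X_n$ and $\Y_n$.

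The key computation is to verify that no function computing some element of $\Z_m$ (for $m \neq n$) can help reduce a member of $c$ below the threshold already available inside $[\X_n,\Y_n]$; this is precisely the separation guaranteed by \eqref{eigenschap}, which says $\{f\}\not\geq_w \Z_m$ for $f \in J_n$, and by the incomparability recorded in \eqref{incomp}. Intuitively, the Turing degrees carrying $\Z_m$ are placed so far from those carrying $J_n$ and $\Z_n$ that any Muchnik reduction witnessing $c \cup \Z \le_w d \cup \Z$ restricted to the relevant cone must already be witnessed using $d \cup \Z_n$ alone, i.e.\ within the block indexed by $n$. I would thus argue that for every $g \in d$ (equivalently every relevant $g$ computing $d \meet \Z$) the witness for $d \meet \Z \le_w c \meet \Z$ cannot route through the alien blocks, yielding $d \le_w c$, and symmetrically $c \le_w d$.

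The main obstacle I anticipate is bookkeeping the nonuniformity of $\le_w$ together with the union structure of the meet: one must argue, case by case on which block a given oracle $g$ lives in, that the foreign $\Z_m$ never provide a shortcut. The incomparability condition \eqref{incomp}, of the form $f \oplus h >_T g$ for $f \in \hat J_m$, $g \in \hat J_n$, $h \in J_n$, is exactly what rules out such shortcuts, so the heart of the proof is a careful degree-theoretic verification that meeting with $\Z$ rather than $\Z_n$ adds nothing recoverable inside the interval. Once injectivity is established, the map $c \mapsto c \meet \Z$ is a bijective lattice homomorphism between finite distributive lattices, hence a lattice isomorphism, and therefore automatically an isomorphism of Brouwer algebras, completing the proof.
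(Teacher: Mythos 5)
Your framework is identical to the paper's: the map is $\C \mapsto \C\meet\Z$, it is a surjective lattice homomorphism from $[\X_n,\Y_n]$ onto $[\X_n\meet\Z,\Y_n\meet\Z]=[\hat{\X}_n,\hat{\Y}_n]$ by Lemma~\ref{surjectivity}, and injectivity is the crux. But your injectivity argument never gets past stating the intention that the foreign blocks $\Z_m$ ($m\neq n$) ``provide no shortcut,'' and the one idea that makes this work is missing. Property \eqref{eigenschap} only speaks about functions $f\in J_n$; an arbitrary $g\in\C_0$ is just some function with $\{g\}\geq_w \X_n = \Z_n\meet J_n$, and a priori nothing stops such a $g$ from computing elements of $\Z_m$ for $m\neq n$, so \eqref{eigenschap} cannot be applied to it directly. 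The paper supplies the bridge by a case split: given $g\in\C_0$, either $\{g\}\geq_w \Z_n$, in which case $\{g\}\geq_w \C_1\meet\Z_n$ trivially; or $\{g\}\geq_w J_n$, and then one uses the explicit form \eqref{Zn} of $\Z_n$ together with the fact that $J_n$ is embedded as an \emph{initial segment} of the Turing degrees to conclude that $g$ itself lies in $J_n$. Only at that point does \eqref{eigenschap} apply to $g$, giving that $g$ computes no element of any $\Z_m$ with $m\neq n$; combined with $\C_0\geq_w \C_1\meet\Z$ (which follows from the hypothesis $\C_0\meet\Z\equiv_w\C_1\meet\Z$) this yields $\{g\}\geq_w \C_1\meet\Z_n$, hence $\C_0\geq_w \C_1\meet\Z_n\equiv_w\C_1$ (using $\Z_n\geq_w\Y_n\geq_w\C_1$), and symmetry finishes. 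Without this reduction of the oracle to a member of $J_n$, your appeal to the separation properties does not get off the ground.

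A smaller but telling point: \eqref{incomp} plays no role in this lemma. In the paper it is used only to prove Lemma~\ref{lemmajoin} (that $\hat{\Y}\join\hat{\X}_n\equiv_w\hat{\Y}_n$). Declaring that injectivity rests on both \eqref{eigenschap} and \eqref{incomp}, and calling the latter ``exactly what rules out such shortcuts,'' indicates that the actual mechanism had not been located; the correct statement is that injectivity needs \eqref{eigenschap}, \eqref{Zn}, and the initial-segment property of $J_n$, and nothing else.
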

\begin{proof}
Define a mapping from $[\X_n,\Y_n]$ to $[\X_n\meet\Z,\Y_n\meet\Z]$ by $\C \mapsto
\C\meet\Z$. Clearly the mapping is a homomorphism, and it is surjective by
Lemma~\ref{surjectivity}. We check that it is also injective: Suppose that $\C_0$,
$\C_1\in [\X_n,\Y_n]$ and that $\C_0 \meet \Z \equiv_w \C_1 \meet \Z$. We claim that
$\C_0 \geq_w \C_1 \meet \Z_n$: Suppose that $g\in\C_0$. Then $\{g\} \geq_w \X_n = \Z_n
\meet J_n$. If $\{g\}\geq_w \Z_n$ then clearly it can be mapped to $\C_1\meet\Z_n$. If
$\{g\}\not\geq_w \Z_n$ then we have $\{g\}\geq_w J_n$, and it follows from \eqref{Zn} and
the fact that $J_n$ is an initial segment that $g\in J_n$. But in this case it follows
from \eqref{eigenschap} and the assumption $\C_0 \geq_w \C_1 \meet \Z$ that $\{g\}\geq_w
\C_1 \meet \Z_n$. Hence $\C_0 \geq_w \C_1 \meet \Z_n \equiv_w \C_1$ (note that $\Z_n
\geq_w \C_1$ since $\Y_n\geq_w \C_1$), and symmetrically we have that $\C_1\geq_w \C_0$,
hence $\C_0\equiv_w \C_1$. 
\end{proof}

Now let
$$
\hat{\Y} = \bigcup_{n\in\omega} \hat{\Y}_n.
$$

\begin{lemma} \label{lemmajoin}
$\hat{\Y}\join \hat{\X}_n \equiv_w \hat{\Y}_n$ for every~$n$.
\end{lemma}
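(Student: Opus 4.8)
The plan is to establish the two Muchnik inequalities $\hat{\Y}\join\hat{\X}_n \le_w \hat{\Y}_n$ and $\hat{\Y}_n \le_w \hat{\Y}\join\hat{\X}_n$ separately. The first is a soft upper-bound argument; the second is where the combinatorial input from \eqref{incomp} is needed, and is the real content of the lemma.

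For $\hat{\Y}\join\hat{\X}_n \le_w \hat{\Y}_n$ I would simply check that $\hat{\Y}_n$ is an upper bound of both $\hat{\Y}$ and $\hat{\X}_n$; since the join is the least upper bound in $\M_w$, it then lies below $\hat{\Y}_n$. That $\hat{\Y}\le_w\hat{\Y}_n$ is immediate from $\hat{\Y}=\bigcup_{m}\hat{\Y}_m\supseteq\hat{\Y}_n$, because a larger mass problem is Muchnik-below a smaller one. That $\hat{\X}_n\le_w\hat{\Y}_n$ follows from $\X_n\le_w\Y_n$ (valid since $[\X_n,\Y_n]$ is a genuine interval) together with monotonicity of $\meet$, using the equivalences $\hat{\X}_n\equiv_w\X_n\meet\Z$ and $\hat{\Y}_n\equiv_w\Y_n\meet\Z$.

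For the reverse inequality I would, following the paper's convention of identifying degrees with representatives, realize the join as $\set{p\oplus q : p\in\hat{\Y},\ q\in\hat{\X}_n}$ and show that every such $p\oplus q$ computes a member of $\hat{\Y}_n\equiv_w\Z\cup\bigcup_{f\in\hat J_n}\set{x:x>_T f}$. Each $p\in\hat{\Y}$ lies in some $\hat{\Y}_m$, so, modulo the harmless $\meet$-prefixes, $p$ either computes an element of $\Z$ or computes some $g>_T f$ with $f\in\hat J_m$; likewise $q\in\hat{\X}_n\equiv_w\Z\cup J_n$ either computes an element of $\Z$ or some $h\in J_n$. If either of $p,q$ has $\Z$-type the claim is immediate, since $\Z\subseteq\hat{\Y}_n$; and if $m=n$ then $p$ already computes $g>_T f$ with $f\in\hat J_n$, i.e. an element of $\hat{\Y}_n$.

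The hard part will be the cross case $m\neq n$, where $p$ computes some $g>_T f$ with $f\in\hat J_m$ and $q$ computes some $h\in J_n$, yet neither input alone reaches $\hat J_n$. Here I would invoke \eqref{incomp}: fixing any $g'\in\hat J_n$, the hypotheses $f\in\hat J_m$, $g'\in\hat J_n$, $h\in J_n$ yield $f\oplus h>_T g'$, and since $p\oplus q$ computes $g\oplus h\ge_T f\oplus h>_T g'$ we get $g\oplus h\in\set{x:x>_T g'}\subseteq\hat{\Y}_n$, as required. This is exactly the situation \eqref{incomp} was engineered to handle, and closing this case completes the reduction. I expect the only delicate points, beyond applying \eqref{incomp} correctly, to be the bookkeeping around the $\meet$-codings, i.e. stripping the finite prefixes without disturbing Turing degree; note that property \eqref{eigenschap} is not needed here, only \eqref{incomp}.
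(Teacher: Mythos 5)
Your proof is correct and follows essentially the same route as the paper's: the easy direction via the least-upper-bound property, and for the hard direction the same three-way case split ($\Z$-type elements, $m=n$, and the cross case $m\neq n$) with \eqref{incomp} applied to an arbitrary element of $\hat J_n$, exactly as in the paper (where that element is called $l$). Your remark that \eqref{eigenschap} is not needed here also matches the paper, which uses it only in Lemma~\ref{lem:interval}.
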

\begin{proof}
The direction $\leq_w$ is immediate from $\hat{\Y}\leq_w \hat{\Y}_n$ and
$\hat{\X}_n\leq_w \hat{\Y}_n$. For the other direction, suppose that $g\in\hat{\Y}$ and
$h\in\hat{\X}_n$. We have to show that $g\oplus h$ computes some function in
$\hat{\Y}_n$. Suppose that $g\in \hat{\Y}_m$. If $n=m$ then we are done.
If either $g$ or $h$ is in $\langle 0 \rangle\concat\Z$ then we are also done because
$\langle 0 \rangle\concat\Z \subseteq \hat{\Y}_n$.

In the remaining case we have $n\neq m$, $h\in \langle 1 \rangle\concat J_n$, and $g\in
\{f\}'$ for some $f\in \hat J_m$. Let $l$ be any element of $\hat J_n$. Then by
\eqref{incomp} we have $f\oplus h >_T l$, hence $g\oplus h \geq_T f\oplus h \in\{l\}'
\geq_w \hat{\Y}_n$.
\end{proof}

\begin{theorem} \label{thm:firstgood}
There exists a set of reals $\hat{\Y}$ such that $\Th(\M_w(\le_w \hat{\Y})) = \IPC$.
\end{theorem}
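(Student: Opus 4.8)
The plan is to take $\hat{\Y} = \bigcup_{n\in\omega} \hat{\Y}_n$ exactly as defined above, and to establish the two inclusions $\IPC \subseteq \Th(\M_w(\le_w \hat{\Y}))$ and $\Th(\M_w(\le_w \hat{\Y})) \subseteq \IPC$ separately. For the first (soundness) inclusion I would observe that $\M_w(\le_w \hat{\Y})$ is the interval $\M_w[\0, \hat{\Y}]$ of the Brouwer algebra $\M_w$ (Lemma~\ref{lem:both}), hence is itself a Brouwer algebra by Lemma~\ref{lem:intervals}. Since every theorem of $\IPC$ is true in every Brouwer algebra (the standard soundness of the algebraic semantics, see \cite{Rasiowa-Sikorski:Book}), this gives $\IPC \subseteq \Th(\M_w(\le_w \hat{\Y}))$ at once.

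The substance lies in the reverse inclusion, which I would obtain by showing $\Th(\M_w(\le_w \hat{\Y})) \subseteq \Th(B_n)$ for every $n$ and then intersecting, invoking $\IPC = \bigcap_n \Th(B_n)$ from Corollary~\ref{corBrouwer}. Fix $n$. By Theorem~\ref{characterization} composed with Lemma~\ref{lem:interval}, $B_n$ is isomorphic, as a finite distributive lattice and hence automatically as a Brouwer algebra, to the interval $\M_w[\hat{\X}_n, \hat{\Y}_n]$, so $\Th(B_n) = \Th(\M_w[\hat{\X}_n, \hat{\Y}_n])$. The crucial point is that Lemma~\ref{lemmajoin} gives $\hat{\Y} \join \hat{\X}_n \equiv_w \hat{\Y}_n$, which is precisely the hypothesis $c \join a = b$ of Lemma~\ref{hom} with $c = \hat{\Y}$, $a = \hat{\X}_n$, and $b = \hat{\Y}_n$. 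Applying Lemma~\ref{hom}, the map $x \mapsto x \join \hat{\X}_n$ is a Brouwer homomorphism of $\M_w(\le_w \hat{\Y})$ onto $\M_w[\hat{\X}_n, \hat{\Y}_n]$, and its stated consequence yields $\Th(\M_w(\le_w \hat{\Y})) \subseteq \Th(\M_w[\hat{\X}_n, \hat{\Y}_n]) = \Th(B_n)$. Taking the intersection over all $n$ gives $\Th(\M_w(\le_w \hat{\Y})) \subseteq \bigcap_n \Th(B_n) = \IPC$, completing the argument.

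I expect the real obstacle to lie not in this final assembly but in the ingredient already isolated as Lemma~\ref{lemmajoin}: one needs the single degree $\hat{\Y}$, which lies below every $\hat{\Y}_n$, to simultaneously recover each $\hat{\Y}_n$ when joined with the corresponding $\hat{\X}_n$. This is what forces the mutually independent embeddings of the posets $J_n$ into the Turing degrees satisfying the separation conditions \eqref{eigenschap} and \eqref{incomp}; without them a single $g \in \hat{\Y}_m$ together with $h \in \hat{\X}_n$ for $n \neq m$ might fail to compute anything in $\hat{\Y}_n$, and the homomorphism onto $B_n$ would collapse. Granted those conditions and Lemma~\ref{lemmajoin}, the only remaining routine check is that $x \mapsto x \join \hat{\X}_n$ genuinely lands in $[\hat{\X}_n, \hat{\Y}_n]$ for $x \le_w \hat{\Y}$, which is immediate since $\hat{\X}_n \le_w x \join \hat{\X}_n \le_w \hat{\Y} \join \hat{\X}_n \equiv_w \hat{\Y}_n$.
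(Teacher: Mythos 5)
Your proposal is correct and follows essentially the same route as the paper's own proof: both establish the hard inclusion by applying Lemma~\ref{hom} with $c = \hat{\Y}$, $a = \hat{\X}_n$, $b = \hat{\Y}_n$ (justified by Lemma~\ref{lemmajoin}), identify $[\hat{\X}_n,\hat{\Y}_n]$ with $B_n$ via Lemma~\ref{lem:interval} and Theorem~\ref{characterization}, intersect over $n$ using Corollary~\ref{corBrouwer}, and note that the converse inclusion holds for any $\hat{\Y}$. You have merely spelled out the steps the paper compresses into one display, and correctly located the real work in Lemma~\ref{lemmajoin} and the separation conditions \eqref{eigenschap} and \eqref{incomp}.
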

\begin{proof}
Let $\hat{\X}_n$, $\hat{\Y}_n$, and $\hat{\Y}$ be as above. Since by
Lemma~\ref{lemmajoin} we have $\hat{\Y}\join \hat{\X}_n \equiv_w \hat{\Y}_n$ for
every~$n$, by Lemma~\ref{hom} we have that
$$
\Th\big(\M_w(\leq_w \hat{\Y})\big) \subseteq
\bigcap_n \Th\big([\hat{\X}_n,\hat{\Y}_n]\big)
= \bigcap_n \Th(B_n) = \IPC.
$$
The equality $\Th\big(\M_w(\leq_w \hat{\Y})\big)= \IPC$ follows since $\IPC \subseteq
\Th\big(\M_w(\leq_w \hat{\Y})\big)$ holds for any~$\hat{\Y}$.
\end{proof}

\section{$\M_w$ as a Heyting algebra}\label{sct:dual}

For the dual of $\M_w$ we have a similar result, but easier to prove and in fact
stronger: the result, and its consequences, listed below, were already noticed in Sorbi
\cite{Sorbi:Quotient}, with sketched proof.

Let $\{H_n\}_{n\in\omega}$ be the family of Heyting algebras from
Corollary~\ref{corHeyting}. We refer to a result from \cite{Terwijnta} (the right-to left
implication appeared also in \cite{Sorbi:Quotient}):

\begin{lemma}\label{lem:initial}
A finite distributive lattice is isomorphic to an initial segment of the Muchnik lattice
if and only if it is weakly projective, and $0$ is meet-irreducible.
\end{lemma}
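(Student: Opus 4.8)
The plan is to prove the two implications separately, leaning on the interval characterization in Theorem~\ref{characterization} and on the extra freedom hidden in the construction behind it. The recurring tool is Lemma~\ref{lem:useful}, which tells us that meet in $\M_w$ is represented by union of mass problems; since a union of mass problems is solvable exactly when one of them is, the bottom degree $\mathbf 0$ of $\M_w$ is \emph{meet-irreducible}: if $\mathbf 0 = \A\meet\B$ then $\A\cup\B$ is solvable, so $\A\equiv_w\mathbf 0$ or $\B\equiv_w\mathbf 0$. For the necessity direction, suppose $L$ is isomorphic to an initial segment $\M_w(\le_w\mathbf B)=[\mathbf 0,\mathbf B]$. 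As this is in particular an interval of $\M_w$, Theorem~\ref{characterization} gives that $L$ is not dd-like, hence weakly projective. Moreover the isomorphism sends $0$ to the bottom $\mathbf 0$ of the interval, and meets computed inside $[\mathbf 0,\mathbf B]$ agree with meets in $\M_w$, so meet-irreducibility of $\mathbf 0$ in $\M_w$ transfers to meet-irreducibility of $0$ in $L$.

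The first step of the converse is to translate the hypothesis on $0$ into a condition on the poset $J(L)$. Representing $L$ as the lattice of order ideals (downward closed subsets) of $J(L)$, with $0=\emptyset$ and meet given by intersection, one checks that $0$ is meet-irreducible if and only if $J(L)$ has a least element. Indeed, if $J(L)$ had two distinct minimal elements $p,q$, then the principal ideals $\{p\}$ and $\{q\}$ would be nonzero with intersection $\emptyset$, witnessing that $0$ is meet-reducible; conversely, if $J(L)$ has a least element $\bot$, then every nonempty ideal contains $\bot$ (its minimal elements are minimal in $J(L)$), so no two nonzero ideals meet to $\emptyset$. Thus from weak projectivity together with meet-irreducibility of $0$ we obtain that $J(L)$ is an initial segment of an upper semilattice \emph{and} possesses a least element $\bot$.

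For the sufficiency direction proper I would revisit the construction from \cite{Terwijnta} underlying Theorem~\ref{characterization}, which realizes $L$ as an interval $[\X,\Y]$ of $\M_w$ with $\X=\Z\meet J$, where $J=J(L)$ is embedded as an interval of the Turing degrees and $\Z=\bigcup_{f\in J}\Z^f$ is as in \eqref{Zn}. The crucial observation is that the Lachlan--Lebeuf theorem lets us choose the bottom of this Turing-degree embedding freely; taking it to be the computable degree and using that $J(L)$ has a least element $\bot$, we may arrange that $\bot$ is represented by a \emph{computable} function $f_0\in J$ (with all other elements of $J$ noncomputable). Then $J$ is solvable, and by Lemma~\ref{lem:useful} we get $\X=\Z\meet J\equiv_w\Z\cup J\equiv_w\mathbf 0$, since $\Z\cup J$ contains the computable function $f_0$. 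Consequently $[\X,\Y]=[\mathbf 0,\Y]=\M_w(\le_w\Y)$ is an initial segment of $\M_w$ isomorphic to $L$, as required.

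The main obstacle is precisely this last adjustment of the construction. One must verify that placing the base of the embedding at the computable degree is compatible with everything else in \cite{Terwijnta}: the isomorphism type of $[\X,\Y]$ depends only on the order structure of $J$, not on the absolute position of its bottom, and the sets $\Z^f$ of \eqref{Zn} are defined relative to the covering structure of $J$, so they are unaffected by this choice; one also checks that the representative $f_0$ of $\bot$ can genuinely be taken computable without disturbing the embedding of the remaining, strictly larger degrees. Granting these points, the equivalence $\X\equiv_w\mathbf 0$ is immediate, and combined with the necessity argument it yields the lemma.
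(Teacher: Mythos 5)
There is a wrinkle you could not have known: the paper itself contains no proof of Lemma~\ref{lem:initial}; it is imported wholesale from \cite{Terwijnta} (with the right-to-left direction also credited to \cite{Sorbi:Quotient}), and the only indication of how the cited proof goes is the summary of Terwijn's Theorem~3.11 recalled in Section~\ref{factorMw}. Measured against that summary, your proposal is correct and reconstructs what is evidently the intended argument. Your necessity direction is complete: since meet in $\M_w$ is represented by union of mass problems (Lemma~\ref{lem:useful}) and a union is solvable exactly when one of its parts is, the least element of $\M_w$ is meet-irreducible; intervals are sublattices, so this transfers to $L$; and weak projectivity follows from Theorem~\ref{characterization} together with the proposition of Section~\ref{sec:algebras} identifying weak projectivity with ``not dd-like''. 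Your translation of meet-irreducibility of $0$ into ``$J(L)$ has a least element'' is also right, and is exactly what excludes, say, the four-element Boolean algebra, which is weakly projective but has meet-reducible $0$.

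For sufficiency, choosing the Lachlan--Lebeuf embedding with base the computable degree, so that the least element $\bot$ of $J(L)$ has a computable representative $f_0$, and concluding $\X=\Z\meet J\equiv_w \Z\cup J\equiv_w\mathbf{0}$, hence $[\X,\Y]=\M_w(\le_w \Y)$, is the right move, and it needs less hedging than you give it. The paper's own formulation of the Lachlan--Lebeuf theorem allows an arbitrary base $\mathbf{a}$, and Theorem~3.11 as recalled places no restriction on that base; moreover, the properties the isomorphism proof actually uses --- that the image of $J$ be downward closed in the Turing degrees above the base, and the incomparability clauses in the definition \eqref{Zn} of the sets $\Z^f$ --- are insensitive to whether the base is computable, while meet-irreducibility guarantees that $\bot$ is the unique minimal element of $J(L)$ and lies in every nonempty ideal, so the images of nonzero elements of $L$ still consist entirely of noncomputable functions and hence remain nonzero degrees. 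The only point you gloss that a self-contained write-up must add is this: $J(L)$ itself need not be an upper semilattice (only an initial segment of one), so one first extends it, say by adjoining a top, embeds the extension by Lachlan--Lebeuf, and uses that $J(L)$ sits inside it as an initial segment containing the least element.
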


\begin{corollary} \label{cor:secondgood}
$\IPC=\Th_H(\M_w(\ge \0'))$.
\end{corollary}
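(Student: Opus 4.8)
The plan is to prove the two inclusions separately, using Corollary~\ref{corHeyting} and Lemma~\ref{lem:initial} as the main inputs. For the inclusion $\IPC \subseteq \Th_H(\M_w(\ge \0'))$, I would first note that $\M_w(\ge \0')$ is an interval of the Heyting algebra $\M_w$ and hence is itself a Heyting algebra (the dual of Lemma~\ref{lem:intervals}); the inclusion is then just soundness of $\IPC$ for Heyting-algebra semantics, exactly as in the final line of the proof of Theorem~\ref{thm:firstgood}.

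For the reverse inclusion $\Th_H(\M_w(\ge \0')) \subseteq \IPC$, write $M = \M_w(\ge \0')$ and recall from Corollary~\ref{corHeyting} that $\IPC = \bigcap_n \Th_H(H_n)$ with each $H_n$ finite and weakly projective. It therefore suffices to produce, for each $n$, a surjective Heyting homomorphism $M \twoheadrightarrow H_n$: by the Heyting-algebra dual of Lemma~\ref{surj}(2) this yields $\Th_H(M) \subseteq \Th_H(H_n)$, and intersecting over $n$ gives the claim. To build such a homomorphism I would exhibit $H_n$ as an initial segment of $M$ in the Heyting (i.e.\ dual) sense, namely as an interval $[\0', b]$ of $\M_w$; then, dually to the ideal-quotient fact recorded just before Theorem~\ref{Skvortsova} (filter congruences are Heyting congruences), the quotient map $M \to M(\le_w b) = [\0', b]$ is the desired surjective Heyting homomorphism, and $M(\le_w b)\cong H_n$ forces $\Th_H(M(\le_w b))=\Th_H(H_n)$.

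The crux is realizing each weakly projective $H_n$ as an interval with bottom exactly $\0'$. Lemma~\ref{lem:initial} characterizes initial segments of $\M_w$ as the weakly projective lattices in which $0$ is meet-irreducible, and a generic $H_n$ need not have $0$ meet-irreducible. I would circumvent this by adjoining a new least element $\hat{0}$ below $0_{H_n}$, obtaining $H_n^\ast$. In $H_n^\ast$ the old bottom $0_{H_n}$ becomes the unique atom, so $\hat{0}$ is meet-irreducible; and one checks that $H_n^\ast$ is still weakly projective, since adjoining a global minimum to $J(H_n)$ creates no new incomparable pair and leaves minimal upper bounds of existing pairs unchanged, so $H_n^\ast$ is again not dd-like. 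By Lemma~\ref{lem:initial}, $H_n^\ast$ is isomorphic to an initial segment $[\mathbf{0}, b]$ of $\M_w$. Since $\0'$ is the unique atom of $\M_w$ and isomorphisms carry atoms to atoms, the unique atom $0_{H_n}$ of $H_n^\ast$ must correspond to $\0'$; passing to the intervals above the atoms gives $[\0', b] \cong H_n$ as finite distributive lattices, hence as Heyting algebras. In particular $b \ge \0'$, so $b \in M$ and $M(\le_w b)=[\0',b]$, supplying the interval required above.

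The step I expect to be the main obstacle is precisely this adjustment of $H_n$: one must confirm both that adjoining a bottom preserves weak projectivity and that the resulting atom of the initial segment is forced to be $\0'$, so that the recovered interval $[\0', b]$ sits inside $M$ and is genuinely isomorphic to $H_n$. Everything else — soundness, the filter-quotient homomorphism, and the intersection over $n$ — is routine given the lemmas already established.
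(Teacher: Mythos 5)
Your proof is correct, but at the decisive step it runs in the opposite direction to the paper's own argument. The common core is the same: both proofs use Corollary~\ref{corHeyting}, both adjoin a new bottom element (your $H_n^\ast$ is exactly the paper's $I_1 + H_n$) so that Lemma~\ref{lem:initial} applies, and both exploit the fact that $\0'$ is the \emph{least} nonzero Muchnik degree, so that the atom of any nontrivial initial segment is forced to be $\0'$ and hence $H_n \cong [\0', b]$ as Heyting algebras; the soundness inclusion is handled identically. From there the paper goes \emph{injective}: it first sharpens Corollary~\ref{corHeyting} to finite weakly projective algebras with join-irreducible top (via $H^+ = H + I_1$ and a filter quotient of the finite algebras), precisely because a Heyting embedding must send top to top; it then splits off the top of such an $H$, maps it to $\mathbf{1}_{\M_w}$, realizes the remainder $H^-$ as $[\0', b]$, and applies the injective part (1) of Lemma~\ref{surj}. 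You go \emph{surjective}: you quotient $\M_w(\ge \0')$ by the principal filter generated by $b$, i.e.\ use the map $x \mapsto x \meet b$ onto $[\0', b] \cong H_n$ (this is the Heyting dual of Lemma~\ref{hom}, whose hypothesis $\0' \meet b = \0'$ holds trivially since $\0' \le_w b$), and apply the surjective part (2) of Lemma~\ref{surj}. Your route needs no preliminary reduction to algebras with join-irreducible top, and it is the exact dual of the paper's proof of Theorem~\ref{thm:firstgood}, so the Brouwer and Heyting halves of the paper would become uniform under your argument. It also sidesteps the one delicate verification in the paper's proof, namely that the extended map $G$ preserves $\imp$ (which requires that implications of elements of the initial segment, computed in $\M_w$, do not escape $[\mathbf{0}, b]$ except in the trivial case where they equal $\mathbf{1}_{\M_w}$): a filter quotient is automatically a Heyting homomorphism onto the interval equipped with its intrinsic Heyting structure, as you note. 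What the paper's embedding approach buys in exchange is a slightly stronger structural conclusion, namely that every finite weakly projective Heyting algebra with join-irreducible top actually Heyting-embeds into $\M_w(\ge \0')$, rather than merely arising as a quotient of it.
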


\begin{proof}
For every weakly projective finite lattice $H$, define $H^+=H + I_1$ (using the notation
of section~\ref{sec:algebras}.) Notice that $H$ is isomorphic to a factor of $H^+$,
obtained by dividing by the principal filter generated by $1_H$, that is the image of the
top element of $H$ into $H^+$. Since filters provide congruences of Heyting algebras, we
have by Lemma \ref{surj} that
$$
\Th_H(H^+)\subseteq \Th_H(H).
$$
It follows:
$$
\IPC=\bigcap \bigset{\Th_H(H):
H \text{ finite, weakly projective, with join-irreducible $1$}}.
$$
Suppose now that $H$ is a finite, weakly projective distributive lattice, with
join-irreducible $1$: let $H^-$ be such that $H=(H^-)^+$. Embed $I_1 + H^-$ as an initial
segment of $\M_w$, which is possible by Lemma~\ref{lem:initial}. Let $F$ be the
embedding, which is also a Heyting algebra embedding, since the range of $F$ is an
initial segment. Then the mapping
$$
G(x)=
\left\{
  \begin{array}{ll}
    F(x), & \hbox{if $x \in H^-$;} \\
   \mathbf{1}_{\M_w},
& \hbox{if $x=1_H$}
  \end{array}
\right.
$$
is a Heyting embedding of $H$ into $\M_w(\ge_w \0'))$. Thus $\IPC=\Th_H(\M_w(\ge_w \0'))$
by Lemma~\ref{surj}.
\end{proof}

A proof of the following corollary was already outlined in Sorbi~\cite{Sorbi:Quotient}.
\begin{corollary}\label{cor:full}
 $\Th_H(\M_w)=\Th(\M_w)=\Jan$.
\end{corollary}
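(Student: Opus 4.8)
The plan is to prove the two equalities separately, treating $\Th_H(\M_w)=\Jan$ as the substantial one; the equality $\Th(\M_w)=\Jan$ for the Brouwer theory is already known (\cite{Sorbi:Brouwer}) and can in any case be obtained by the order-dual of the argument below. First I would establish $\Jan\subseteq\Th_H(\M_w)$. Since every Heyting algebra validates $\IPC$, only the weak law of excluded middle needs checking. By Lemma~\ref{lem:useful} the meet in $\M_w$ is, up to $\equiv_w$, set-theoretic union, so $\A\meet\B\equiv_w 0$ holds iff $\A$ or $\B$ is solvable; hence $0$ is meet-irreducible in $\M_w$. Consequently the Heyting negation is degenerate: $\neg a=0$ whenever $a\neq 0$, while $\neg 0=1$, so that $\neg a\lor\neg\neg a$ evaluates to the top for every $a$. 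Thus $\neg\alpha\lor\neg\neg\alpha\in\Th_H(\M_w)$ and $\Jan\subseteq\Th_H(\M_w)$.

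For the reverse inclusion $\Th_H(\M_w)\subseteq\Jan$ I would pass through finite algebras embedded as initial segments. By Lemma~\ref{lem:initial}, every finite distributive lattice $H$ that is weakly projective and has $0$ meet-irreducible is isomorphic to an initial segment of $\M_w$; exactly as in the proof of Corollary~\ref{cor:secondgood}, such an embedding $F$ is a Heyting embedding (its range is an initial segment), so Lemma~\ref{surj} gives $\Th_H(\M_w)\subseteq\Th_H(H)$. It therefore suffices to produce a family of such $H$ whose Heyting theories intersect to exactly $\Jan$, i.e. to prove
\[
\Jan=\bigcap\bigset{\Th_H(H):H\text{ finite, weakly projective, with }0\text{ meet-irreducible}}.
\]
The inclusion $\subseteq$ is the first step applied to each such $H$. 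For $\supseteq$ I would start from the Ja\'skowski family $\{I_n\}$ of Corollary~\ref{corHeyting}, which is weakly projective with $\bigcap_n\Th_H(I_n)=\IPC$, and pass to $H_n=I_1+I_n$, adjoining a new bottom via the Troelstra sum. Adjoining a bottom makes $0$ meet-irreducible and turns $J(I_n)$ into the poset $\{0_{I_n}\}\cup J(I_n)$ with a new least element, which preserves ``not dd-like''; so each $H_n$ is again weakly projective and $0$-meet-irreducible, and $\Jan\subseteq\bigcap_n\Th_H(H_n)$.

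The crux, which I expect to be the main obstacle, is the opposite inclusion $\bigcap_n\Th_H(I_1+I_n)\subseteq\Jan$. Here I would argue frame-theoretically: adjoining a bottom to the lattice corresponds to adjoining a greatest element to the Kripke frame $J(I_n)^{\mathrm{op}}$, and $\Jan$ is complete with respect to finite rooted directed frames, equivalently finite frames with a top. Given $\phi\notin\Jan$, fix such a refuting frame $G$, write $G=G_0+\text{top}$, use the $\IPC$-completeness of $\{I_n\}$ to realize $G_0$ as a p-morphic image of a generated subframe of some $J(I_n)^{\mathrm{op}}$ (via Jankov characteristic formulas), and then verify that this cover is compatible with adjoining the top, so that $G$ itself is a p-morphic image of a generated subframe of the frame of $I_1+I_n$; non-validity of $\phi$ then transfers back, giving $I_1+I_n\not\models\phi$. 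Checking that p-morphisms and generated subframes interact correctly with the ``$+\,\text{top}$'' operation is the delicate point, since a bare refutation in $J(I_n)^{\mathrm{op}}$ need not survive the addition of a top. Alternatively, and perhaps more cleanly, I would note that by $\Th_H(H)=\Th(\dual H)$ and Lemma~\ref{lem:weak-dual} the displayed characterization is the exact order-dual of the weakly-projective, join-irreducible-top description of $\Jan$ that already underlies the known equality $\Th(\M_w)=\Jan$, so it can be imported directly rather than reproved.

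Combining the two inclusions yields $\Th_H(\M_w)=\Jan$, and together with the known $\Th(\M_w)=\Jan$ this gives $\Th_H(\M_w)=\Th(\M_w)=\Jan$.
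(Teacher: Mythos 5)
Your opening step ($\Jan\subseteq\Th_H(\M_w)$ via meet-irreducibility of $0$) is correct and matches the paper, but the proof does not close, for two reasons. The main one is the step you yourself call the crux: $\bigcap_n\Th_H(I_1+I_n)\subseteq\Jan$ is never established. The frame-theoretic sketch (completeness of $\Jan$ for finite rooted frames with a final point, covering the frame minus its top by a Ja\'skowski frame, extending the p-morphism over the added final point) is in principle workable, but all three of its ingredients are left unverified; and your ``cleaner alternative'' is circular, because the ``weakly-projective, join-irreducible-top description of $\Jan$'' you propose to import is not an available result: $\Th(\M_w)=\Jan$ was only \emph{announced} in \cite{Sorbi:Brouwer}, and proving it is precisely part of the content of this corollary. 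The idea you are missing is Jankov's theorem \cite{Jankov:Weak}: $\Jan$ is the \emph{largest} intermediate logic $I$ such that $I^{\rm pos}=\IPC^{\rm pos}$ and $\neg\alpha\vee\neg\neg\alpha\in I$. With it the paper needs no finite algebras on this side at all: since $\0'$ is the least nonzero Muchnik degree, $\M_w=I_1+H$ with $H=\M_w(\ge\0')$; an induction on positive formulas shows they take the same values in $H$ and in $I_1+H$ under valuations from $H$, so by Corollary~\ref{cor:secondgood} the positive fragment of $\Th_H(\M_w)$ equals $\IPC^{\rm pos}$, and since the weak law of excluded middle holds, Jankov's maximality gives $\Th_H(\M_w)\subseteq\Jan$ at once.

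The second problem is that your reduction step is itself flawed: an initial-segment embedding $F\colon H\to\M_w$ supplied by Lemma~\ref{lem:initial} is \emph{not} a Heyting embedding into $\M_w$, so Lemma~\ref{surj} cannot be applied to it. Indeed $F$ sends $1_H$ to the top $\Y$ of a proper initial segment, while for $x\le y$ in $H$ one has $F(x)\imp_{\M_w}F(y)=\mathbf{1}_{\M_w}$, which lies outside the range of $F$; so neither $1$ nor $\imp$ is preserved. This is exactly why Corollary~\ref{cor:secondgood} works with lattices having a join-irreducible top, strips that top off, embeds the remainder (with a bottom added) as an initial segment, and sends the top separately to $\mathbf{1}_{\M_w}$ --- and even then the resulting map $G$ lands in $\M_w(\ge\0')$, not in $\M_w$, which is why the paper only obtains $\IPC=\Th_H(\M_w(\ge\0'))$ from the embeddings and must use the positive-fragment/Jankov argument to pass from $\M_w(\ge\0')$ to $\M_w$. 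Finally, the Brouwer half cannot be outsourced or obtained by a bare dualization of your argument: the order-dual of Lemma~\ref{lem:initial} (about final segments of $\M_w$) is not available in the paper. The paper instead takes $B=\M_w(\le_w\hat{\Y})$ from Theorem~\ref{thm:firstgood}, where $\Th(B)=\IPC$, Brouwer-embeds $B+I_1$ into $\M_w$ by sending the new top to $\mathbf{1}_{\M_w}$, and again concludes $\Th(\M_w)\subseteq\Th(B+I_1)=\Jan$ by Jankov's theorem; so both equalities hinge on Theorem~\ref{thm:firstgood}/Corollary~\ref{cor:secondgood} together with Jankov's maximality theorem, neither of which figures in your proposal.
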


\begin{proof}
Let us show that $\Th_H(\M_w)=\Jan$. For every Heyting algebra $H$ let $H_+=I_1 + H$. Let
us say that a propositional formula is \emph{positive} if it does not contain the
connective $\neg$, and, for every Heyting algebra $H$, let $\Th_H^{\rm
pos}(H)=\bigset{\phi\in \Th_H(H): \text{ $\phi$ positive}}$. We claim that $\Th_{H}^{\rm
pos}(H_+)\subseteq \Th_H^{\rm pos}(H)$: for this, one can show by induction on the
complexity of a positive $\phi$ that for every $\overline{x} \in H^n$,
$p^{H}_\phi(\overline{x})=p^{H_+}_\phi(\overline{x})$. Notice also that for every Heyting
algebra $H$, and any propositional formula $\alpha$, we have that $\neg \alpha \vee \neg
\neg \alpha \in \Th_H(H_+)$, i.e. $\Jan\subseteq \Th_H(H_+)$. Let $H=\M_w(\ge \0'))$, so
that $H_+=\M_w$. By Corollary~\ref{cor:secondgood} we have $\IPC=\Th_H(H)$, hence
$\IPC^{\rm pos}=\Th_H^{\rm pos}(H_+)$, and $\neg \alpha \vee \neg \neg \alpha \in
\Th_H(H_+)$. Therefore one can apply a classic result due to Jankov \cite{Jankov:Weak},
stating that $\Jan$ is the $\subseteq$-largest intermediate propositional logic $I$ such
that $\IPC^{\rm pos}=I^{\rm pos}$ and $\neg \alpha \vee \neg \neg \alpha \in I$. Thus we
also obtain the converse inclusion $\Th_H(H_+)\subseteq \Jan$.

The proof that $\Th(\M_w)=\Jan$ goes like this: let $B=\M_w(\leq_w \hat{\Y})$, with
$\hat{\Y}$ as in Theorem \ref{thm:firstgood}. Dualizing the arguments which have been
used above, show that $\Th^{\rm pos}(B^+)\subseteq \Th^{\rm pos}(B)$, but then again by
Jankov \cite{Jankov:Weak}, $\Th(B^+)=\Jan$, and since $B^+$ is Brouwer embeddable into
$\M_w$ (use $G: B^+ \longrightarrow \M_w$ which extends the embedding of $B$ into $\M_w$,
by $G(1_{B^+})=\mathbf{1}_{\M_w}$) we finally get that $\Th(\M_w)\subseteq \Jan$ (by
Lemma~\ref{surj}), and thus $\Th(\M_w)= \Jan$ since $\neg \alpha \vee \neg \neg \alpha
\in \Th(\M_w)$.
\end{proof}

\bibliographystyle{plain}
\bibliography{muchnik}

\begin{thebibliography}{10}

\bibitem{BalbesDwinger}
R.~Balbes and P.~Dwinger.
\newblock {\em Distributive Lattices}.
\newblock University of Missouri Press, Columbia, 1974.

\bibitem{Graetzer:Universal}
G.~Gr\"atzer.
\newblock {\em Universal Algebra}.
\newblock Springer-Verlag, New York, second edition, 1979.

\bibitem{Graetzer}
G.~Gr\"atzer.
\newblock {\em General lattice theory}.
\newblock Birkh\"auser, Basel, second edition, 2003.

\bibitem{Rogers:Book}
Jr. H.~Rogers.
\newblock {\em Theory of Recursive Functions and Effective Computability}.
\newblock McGraw-Hill, New York, 1967.

\bibitem{Jankov:Weak}
V.~A. Jankov.
\newblock The calculus of the weak law of excluded middle.
\newblock {\em Math. USSR Izvestija}, 2:997--1004, 1968.

\bibitem{Jaskowski}
S.~Ja\'{s}kowski.
\newblock Recherches sur le syst\`{e}me de la logique intuitioniste.
\newblock In {\em Actes du Congr\`{e}s International de Philosophie
  Scientifique VI, Philosophie des Math\'{e}matiques}, volume 393 of {\em
  Actualit\'{e}s Scientifiques et Industrielles}, pages 8--61, Paris, 1936.
  Hermann.

\bibitem{Kolmogorov:Problems}
A.~Kolmogorov.
\newblock Zur deutung der intuitionistischen logik.
\newblock {\em Mathematische Zeitschrift}, 35(1):58--65, 1932.

\bibitem{Lachlan-Lebeuf}
A.~H. Lachlan and R.~Lebeuf.
\newblock Countable initial segments of the degrees of unsolvability.
\newblock {\em Journal of Symbolic Logic}, 41:289--300, 1976.

\bibitem{McKinseyTarski}
J.~C.~C. McKinsey and A.~Tarski.
\newblock Some theorems about the sentential calculi of {L}ewis and {H}eyting.
\newblock {\em Journal of Symbolic Logic}, 13(1):1--15, 1948.

\bibitem{Medvedev}
Yu.~T. Medevdev.
\newblock Degrees of difficulty of the mass problems.
\newblock {\em Dokl. Nauk. SSSR}, 104(4):501--504, 1955.

\bibitem{Medvedev:finite}
Yu.~T. Medvedev.
\newblock Finite problems.
\newblock {\em Dokl. Akad. Nauk SSSR, (NS)}, 142:1015--1018, 1962.

\bibitem{Muchnik:Lattice}
A.~A. Muchnik.
\newblock On strong and weak reducibility of algorithmic problems.
\newblock {\em Sibirskii Matematicheskii Zhurnal}, 4:1328--1341, 1963.
\newblock {R}ussian.

\bibitem{Rasiowa-Sikorski:Book}
H.~Rasiowa and R.~Sikorski.
\newblock {\em The Mathematics of Metamathematics}.
\newblock Panstowe Wydawnictwo Naukowe, Warszawa, 1963.

\bibitem{Skvortsova}
E.~Z. Skvortsova.
\newblock Faithful interpretation of the intuitionistic propositional calculus
  by an initial segment of the {M}edvedev lattice.
\newblock {\em Sibirsk. Mat. Zh.}, 29(1):171--178, 1988.
\newblock Russian.

\bibitem{Sorbi:Someremarks}
A.~Sorbi.
\newblock Some remarks on the algebraic structure of the {M}edvedev lattice.
\newblock {\em Journal of Symbolic Logic}, 55(2):831--853, 1990.

\bibitem{Sorbi:Brouwer}
A.~Sorbi.
\newblock Embedding {B}rouwer algebras in the {M}edvedev lattice.
\newblock {\em Notre Dame J. Formal Logic}, 32(2):266--275, 1991.

\bibitem{Sorbi:Quotient}
A.~Sorbi.
\newblock Some quotient lattices of the {M}edvedev lattice.
\newblock {\em Zeitschrift f\"ur Mathematische Logik und Grundlagen der
  Mathematik}, 37:167--182, 1991.

\bibitem{Sorbi:Medvedev-Survey}
A.~Sorbi.
\newblock The {M}edvedev lattice of degrees of difficulty.
\newblock In S.~B. Cooper, T.~A. Slaman, and S.~S. Wainer, editors, {\em
  Computability, Enumerability, Unsolvability - Directions in Recursion
  theory}, London Mathematical Society Lecture Notes Series, pages 289--312.
  Cambridge University Press, New York, 1996.

\bibitem{Sorbi-Terwijn:Intermediate}
A.~Sorbi and S.~A. Terwijn.
\newblock Intermediate logics and factors of the {M}edvedev lattice.
\newblock {\em Annals of Pure and Applied Logic}, 155(2):69--86, 2008.

\bibitem{Szatkowski}
M.~Szatkowski.
\newblock On fragments of {M}edvedev's logic.
\newblock {\em Studia Logica}, 40(1):39--54, 1981.

\bibitem{Terwijn:Constructive}
S.~A. Terwijn.
\newblock Constructive logic and the {M}edvedev lattice.
\newblock {\em Notre Dame J. Formal Logic}, 47(1):73--82, 2006.

\bibitem{Terwijnta}
S.~A. Terwijn.
\newblock The finite intervals of the {M}uchnik lattice.
\newblock {M}anuscript, 2006.

\bibitem{Troelstra}
A.~S. Troelstra.
\newblock On intermediate propositional logics.
\newblock {\em Indagationes Mathematicae}, 27:141--152, 1965.

\end{thebibliography}

\end{document}